\documentclass[11pt]{article}

\usepackage[all]{xy}
\usepackage{graphicx}
\usepackage{amsmath}
\usepackage{amsfonts}
\usepackage{amssymb}
\usepackage{amsthm}
\usepackage{amscd}
\usepackage{textcomp}
\usepackage{hyperref}


\newcommand{\cB}{\mathcal{B}}

\DeclareFontEncoding{OT2}{}{} 
  \newcommand{\textcyr}[1]{%
    {\fontencoding{OT2}\fontfamily{wncyr}\fontseries{m}\fontshape{n}%
     \selectfont #1}}
\newcommand{\Sha}{{\mbox{\textcyr{Sh}}}}

\newcommand{\cA}{\mathcal{A}}

\newcommand{\Kbar}{\overline{K}}

\newcommand{\cross}{\times}
\newcommand{\hra}{\hookrightarrow}
\newcommand{\ra}{\rightarrow}
\newcommand{\lra}{\longrightarrow}

\newcommand{\tensor}{\otimes}
\newcommand{\vphi}{\varphi}

\newcommand{\ssstyle}{\scriptscriptstyle}

\newcommand{\Q}{\mathbf{Q}}

\newcommand{\Z}{\mathbf{Z}}
\newcommand{\F}{\mathbf{F}}

\newcommand{\kbar}{\overline{k}}

\newcommand{\isom}{\cong}

\renewcommand{\O}{\mathcal{O}}

\DeclareMathOperator{\Gal}{Gal}
\DeclareMathOperator{\im}{im}

\DeclareMathOperator{\tor}{tor}
\DeclareMathOperator{\Spec}{Spec}

\DeclareMathOperator{\ur}{ur}

\DeclareMathOperator{\Hom}{Hom}

\theoremstyle{plain}
\newtheorem{theorem}{Theorem}[section]
\newtheorem{proposition}[theorem]{Proposition}

\newtheorem{lemma}[theorem]{Lemma}
\newtheorem{conjecture}[theorem]{Conjecture}

\theoremstyle{definition}

\theoremstyle{remark}

\numberwithin{equation}{section}

 1

\newcommand{\thetitle}
{Constructing non-trivial elements of the Shafarevich-Tate group of an Abelian Variety over a Number Field}

\begin{document}

\title{\thetitle}
\author{Amod Agashe\footnote{The first author was
supported by 
National 
Security Agency Grant No. Hg8230-10-1-0208.} \ \ and Saikat Biswas}
\maketitle

\begin{abstract}
Let $A$ and $B$ be abelian varieties over a number field $K$ such that 
$A[n]\isom B[n]$ over $K$ for some integer $n$. 
If $A$ has Mordell-Weil rank $0$, 
then we show that under certain additional hypothesis,
there is an injection $B(K)/nB(K) \hra \Sha(A/K)$, where $\Sha(A/K)$ is the Shafarevich-Tate group of $A$. This generalizes work of Cremona and Mazur. 
We also extend this general result to show that 
if the Mordell-Weil rank~$r_B$ of $B$ 
is bigger than the Mordell-Weil 
rank~$r_A$ of $A$ (which need not be zero now), 
then under certain extra hypotheses, 
$n^{r_B-r_A}$ divides the product of the order of $\Sha(A/K)$ and the Tamagawa numbers of $A$. When in addition $r_A = 0$, $n$ is prime, 
and $A$ and~$B$ are optimal elliptic curves of the same conductor,
this divisibility is
predicted by the second part of the Birch and Swinnerton-Dyer Conjecture, and thus provides new
theoretical evidence towards the conjecture.
\end{abstract}

\section{Introduction}\label{intro}

 Let $E/\Q$ be an optimal elliptic curve over $\Q$ having Mordell-Weil
rank $r$. Let $L_{E,\Q}(s)$ be the $L$-function associated to $E/\Q$ (see, e.g., \cite[C.\S 16]{silverman:AEC}). The Birch and Swinnerton-Dyer (BSD) Conjecture has two parts. The first part states
\begin{conjecture}[BSD I]\label{bsd1}
$L_{E,\Q}(s)$ has a zero at $s=1$ whose order is equal to $r$. 
\end{conjecture}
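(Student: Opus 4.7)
The plan is to use the modularity theorem of Wiles, Taylor, Breuil, Conrad, and Diamond to attach a weight-two newform $f \in S_2(\Gamma_0(N))$ to $E$ so that $L_{E,\Q}(s) = L(f,s)$. This provides the analytic continuation of $L_{E,\Q}(s)$ to all of $\C$ and makes the order of vanishing at $s=1$, call it $r_{\mathrm{an}}$, a well-defined non-negative integer. The conjecture then becomes the identity $r_{\mathrm{an}} = r$.

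First I would dispose of the low-rank cases. Using the Gross--Zagier formula, $L'_{E,\Q}(1)$ is expressed as a nonzero multiple of the N\'eron--Tate height of a Heegner point $y_K \in E(K)$ attached to a suitably chosen imaginary quadratic field $K$. Combining this with Kolyvagin's Euler system of Heegner points and controlling the associated Selmer group yields the implications $r_{\mathrm{an}} = 0 \Rightarrow r = 0$ and $r_{\mathrm{an}} = 1 \Rightarrow r = 1$, so whenever $r_{\mathrm{an}} \leq 1$ the statement follows from the Gross--Zagier--Kolyvagin machinery. As a cross-check in the rank-zero case one could instead appeal to the cyclotomic Iwasawa Main Conjecture for $\mathrm{GL}_2$ (Skinner--Urban, Kato).

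The genuine obstacle, and the reason this statement is one of the Clay Millennium Problems, is the case $r_{\mathrm{an}} \geq 2$. In this regime Gross--Zagier forces the classical Heegner point $y_K$ to be torsion, so the Euler system argument produces no information about the Mordell--Weil rank, and no systematic construction of algebraic points of infinite order is known to govern higher-order vanishing. A realistic plan would require either a new source of global points---candidates include Stark--Heegner points, plectic Heegner points, diagonal cycles on triple products, or higher Chow-theoretic cycles---together with a height formula of Gross--Zagier type and a compatible Euler system, or else a purely Iwasawa-theoretic route via $p$-adic $L$-functions that detects higher-order derivatives of the complex $L$-function. I expect the main obstacle to lie precisely here: producing canonical algebraic points that control higher-order vanishing, and proving an arithmetic identity linking them to $L^{(r)}_{E,\Q}(1)$, is where any realistic attempt gets stuck, and at present no full program for carrying this out exists.
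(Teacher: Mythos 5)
This statement is labeled as a \emph{conjecture} in the paper (BSD I, one of the Clay Millennium Problems), and the paper offers no proof of it; it is cited only as background. You have correctly recognized that no proof exists, and your survey of the partial progress is accurate: modularity (Wiles, Taylor--Wiles, Breuil--Conrad--Diamond--Taylor) gives analytic continuation so that the analytic rank is well-defined; Gross--Zagier plus Kolyvagin's Euler system settles the cases of analytic rank $0$ and $1$; and the case of analytic rank $\geq 2$ remains wide open because the Heegner point is torsion there and no substitute construction of global points with a compatible height formula is known. Your assessment of where the obstacle lies is the standard and correct one. There is nothing in the paper to compare this against, so the only note worth making is that you should flag explicitly at the outset that what you are producing is a status report on an open problem rather than a proof -- which you essentially do, but a reader expecting a proof could be momentarily misled by the framing ``the plan is to\ldots''.
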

The order of vanishing at $s=1$ of $L_{E,\Q}(s)$ is called the \emph{analytic rank} of~$E$. 
Let $\Omega_E$ denote the \emph{real volume} of $E$,
computed using a N\'eron differential on~$E$. Suppose now that $E$ has analytic rank $0$ so that $L_{E,\Q}(1)\neq 0$. 
In this case, thanks to the work of Kolyvagin and Logachev, we know that 
the Shafarevich-Tate group~$\#\Sha(E/\Q)$ of~$E$ is finite. 
If $p$ is a prime, then let
$c_{\scriptscriptstyle{E},p}$ denote the Tamagawa number of~$E$
at~$p$ (for a definition, see Section~\ref{sec:neron}).
Then the second part of the BSD Conjecture says:
\begin{conjecture}[BSD II]\label{bsd2}
$$\frac{L_{E,\Q}(1)}{\Omega_E}\overset{?}{=}\frac{\#\Sha(E/\Q)\,\cdot\,\prod_{p}c_{\scriptscriptstyle{E},p}}{(\#E(\Q)_{\tor})^2}$$
\end{conjecture}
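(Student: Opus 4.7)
The final statement is the second part of the Birch and Swinnerton-Dyer conjecture, specialized here to the analytic rank~$0$ case; in this case $\Sha(E/\Q)$ is already known to be finite by Kolyvagin-Logachev, so the formula is meaningful. Still, a complete proof is not known (this is a Clay Millennium Prize problem), so I can only sketch a plausible program rather than a working proof. The plan is to attack the equality one prime at a time: by a theorem of Manin and Drinfeld the ratio $L_{E,\Q}(1)/\Omega_E$ lies in $\Q$, so it suffices to match $p$-adic valuations of the two sides for every prime~$p$.

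For each such $p$, the main tool is the cyclotomic Iwasawa main conjecture for~$E$ at~$p$, combined with Mazur's control theorem. In the good ordinary case, Kato's Euler system yields one divisibility and the work of Skinner-Urban (via Eisenstein congruences on unitary groups) yields the reverse divisibility; evaluating the resulting equality of characteristic ideals at the trivial character of the cyclotomic $\Z_p$-extension gives the $p$-part of the BSD formula, once one carefully reconciles the Euler factor at~$p$ and the Tamagawa contributions at bad primes. Good supersingular primes would be handled via the $\pm$-main conjecture of Kobayashi and Pollack, and multiplicative primes via the Greenberg-Stevens exceptional-zero formula. The $p$-part of $\#E(\Q)_{\tor}$ enters through the Euler factor $(1 - a_p p^{-1} + p^{-1})$ once one performs the descent from the Selmer group over the cyclotomic tower down to $E(\Q)$.

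The main obstacles are twofold. First, the Iwasawa main conjecture is not yet proved uniformly for all primes: residually reducible or non-surjective mod-$p$ Galois representations, small primes, additive reduction, and primes dividing the Manin constant each require separate handling, often via ad hoc visibility or descent arguments (of which the present paper provides an instance that recovers part of the $p$-part of the formula without going through Iwasawa theory at all). Second, matching the N\'eron period $\Omega_E$ against the modular-symbol normalization of the $p$-adic $L$-function requires control of the Manin constant $c_0$, conjectured to equal~$1$ but only known to be a $p$-adic unit for $p>7$ under mild hypotheses; any $p$-part of $c_0$ obstructs the integrality statement at~$p$. For these reasons the program is at present essentially conditional on deep open inputs, and BSD~II remains open even under the analytic-rank-$0$ hypothesis.
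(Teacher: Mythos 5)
This statement is labeled as a \emph{conjecture} (BSD II) in the paper, and the paper offers no proof of it --- it is simply stated as background motivation for the main results. You correctly recognize that this is the open second part of the Birch and Swinnerton-Dyer conjecture (in the analytic-rank-$0$ setting, where finiteness of $\Sha$ is at least known by Kolyvagin--Logachev), and your survey of the Iwasawa-theoretic program (Kato, Skinner--Urban, Kobayashi--Pollack, Greenberg--Stevens, control theorems, Manin constants) is an accurate description of the current partial progress and its obstructions. Since there is no proof in the paper to compare against, no further comparison is possible; the only thing to flag is that the paper's own contribution (Theorem~\ref{mt} and Proposition~\ref{prop1}) is not an attempt to prove this conjecture but rather to produce \emph{unconditional} lower bounds on $\#\Sha(E/\Q)\cdot\prod_\ell c_{\scriptscriptstyle{E},\ell}$ that are consistent with it, which you correctly allude to in passing.
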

All the terms appearing in the formula of Conjecture \ref{bsd2} are computable except $\#\Sha(E/\Q)$. Thus, we can think of Conjecture \ref{bsd2} as giving a conjectural value for $\#\Sha(E/\Q)$ by the formula $$\#\Sha(E/\Q)\overset{?}{=}\frac{L_{E,\Q}(1)}{\Omega_E}\,\cdot\,\frac{(\#E(\Q)_{\tor})^2}{\prod_{p}c_{\scriptscriptstyle{E},p}}$$ 
When the BSD Conjecture predicts that $\#\Sha(E/\Q)$ is non-trivial, it is often desirable to verify the prediction by exhibiting actual, non-trivial elements in $\Sha(E/\Q)$. Conversely, one wishes to explicitly exhibit non-trivial elements in $\Sha(E/\Q)$ without assuming the BSD Conjecture and then verify it by comparing the order of the constructed elements with that of the conjectural order of $\Sha(E/\Q)$. 

In \cite{cremona-mazur}, Cremona and Mazur examined optimal elliptic curves $E/\Q$ of rank~$0$ for which the BSD Conjecture predicts that $\Sha(E/\Q)\isom{\Z}/{p\Z}\,\oplus\,{\Z}/{p\Z}$ for some odd prime $p$. They made the surprising discovery that in many such instances, there exists an optimal elliptic curve $F/\Q$ of the same level as~$E$ whose rank is $2$
and such that $E[p]\isom F[p]$ over~$\Q$. The authors interpreted the situation by claiming that the two independent generators of the Mordell-Weil group $F(\Q)$ \emph{explain} the two independent generators modulo $p$ of $\Sha(E/\Q)$. However, they did not provide adequate theoretical explanation for their claim. This lacuna in \cite{cremona-mazur} was eventually filled in \cite{cmapp} where the authors discussed the details of a criterion that could be used to show that in the examples considered in \cite{cremona-mazur}, the group $F(\Q)/pF(\Q)$ embeds itself in the group $\Sha(E/\Q)$, and in this
sense, $F(\Q)$ ``explains'' elements of~$\Sha(E/\Q)$
(see in particular Proposition A.7 in \cite{cmapp}). 
The goal of this article is to give
a generalization and extension of the discussion in~\cite{cmapp}.


Starting now, we drop the hypothesis that $E$ has analytic rank zero.
The following theorem is a consequence of Theorem \ref{maintheorem}
(in Section~\ref{sec:mainthm}), which is the main result of this paper.

\begin{theorem}\label{mt}
Let $E$ and $F$ be elliptic curves with semistable reduction over $\Q$, having ranks $r_{\ssstyle{E}}$ and $r_{\ssstyle{F}}$ respectively. Let $N$ be an integer divisible by the finitely many primes of bad reduction for both $E$ and $F$. Let $n$ be an odd integer such that
$$gcd\left(n,\;N \cdot\;\#E(\Q)_{\tor}\cdot\;\prod_{\ell}c_{\scriptscriptstyle{F},\ell}\right)=1$$
Suppose further that $F[n]\isom E[n]$ over~$\Q$. 
Then
\begin{description}
\item[(i)] 
If $r_{\ssstyle{F}}>r_{\ssstyle{E}}$, then $n^{(r_{\ssstyle{F}}-r_{\ssstyle{E}})}$ divides $\left(\#\Sha(E/\Q)\cdot\;\prod_{\ell}c_{\scriptscriptstyle{E},\ell}\right)$.
\item[(ii)] If $gcd\left(n,\;\prod_{\ell}c_{\scriptscriptstyle{E},\ell}\right)=1$, then there exists a map $$\vphi\,:\,F(\Q)/nF(\Q) \to \Sha(E/\Q)$$ whose kernel has order at most $n^{r_{\ssstyle{E}}}$. In particular, 
if $r_{\ssstyle{E}}=0$, then $\vphi$ is an injection,
and moreover, it becomes an isomorphism if we further assume that for all primes $p$ dividing $n$, 
$\Sha(F/\Q)$ has trivial $p$-torsion.
\end{description}
\end{theorem}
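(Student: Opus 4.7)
The plan is to derive Theorem~\ref{mt} by constructing an explicit map $\vphi$ via Galois cohomology from the given isomorphism $F[n]\isom E[n]$, then controlling its kernel and image essentially by invoking Theorem~\ref{maintheorem} with $A=E$ and $B=F$. The Kummer sequence for $F$ together with the induced identification $H^1(\Q,F[n])\isom H^1(\Q,E[n])$ yields a natural composition
\[
F(\Q)/nF(\Q)\ \hra\ H^1(\Q,F[n])\ \isom\ H^1(\Q,E[n])\ \lra\ H^1(\Q,E)[n],
\]
and I would define $\vphi$ to be this composition, so that the task reduces to showing its image lies in $\Sha(E/\Q)$ up to the Tamagawa contribution of $E$.

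The main technical point is verifying local triviality of each image class at every place $\ell$ of $\Q$, which I would split into three cases. At primes $\ell\nmid nN$, both $E$ and $F$ have good reduction, the Kummer class is unramified, and the triviality of $H^1_{\ur}(\Q_\ell,E)[n]$ forces the $E$-component to vanish locally. At primes $\ell\mid N$ with $\ell\nmid n$, the hypothesis $\gcd(n,\prod_\ell c_{\ssstyle{F},\ell})=1$ places the Kummer image inside the connected-component piece of $F(\Q_\ell)/nF(\Q_\ell)$; transporting this through the identification $F[n]\isom E[n]$ produces a class on the $E$-side that differs from a connected-component contribution by at most a multiple of $c_{\ssstyle{E},\ell}$, which is absorbed into the Tamagawa factor in part~(i) and excluded by the extra coprimality in part~(ii). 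At primes $\ell\mid n$, the hypothesis $\gcd(n,N)=1$ ensures good reduction for both curves, and $\gcd(n,\#E(\Q)_{\tor})=1$ together with semistability lets me control ramified classes via the formal group, so local triviality still holds.

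Part~(ii) then follows from the Kummer sequence for $E$: the kernel of $H^1(\Q,E[n])\ra H^1(\Q,E)[n]$ is the image of $E(\Q)/nE(\Q)$, which has order exactly $n^{r_{\ssstyle{E}}}$ because $\gcd(n,\#E(\Q)_{\tor})=1$ kills the torsion contribution. Hence $|\Ker\vphi|\le n^{r_{\ssstyle{E}}}$, with injectivity when $r_{\ssstyle{E}}=0$. Under the further assumption $\Sha(F/\Q)[p]=0$ for all $p\mid n$, the Kummer isomorphism $F(\Q)/nF(\Q)\isom\mathrm{Sel}_n(F/\Q)$ combines with the equality $\mathrm{Sel}_n(F/\Q)=\mathrm{Sel}_n(E/\Q)$ (both are cut out by the same local conditions inside the identified cohomology, by the case analysis above) and the identification $\mathrm{Sel}_n(E/\Q)\isom\Sha(E/\Q)[n]$ (valid when $r_{\ssstyle{E}}=0$) to promote $\vphi$ to a bijection onto $\Sha(E/\Q)[n]$, which is the asserted isomorphism.

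For part~(i), $F(\Q)[n]\isom E(\Q)[n]=0$ gives $|F(\Q)/nF(\Q)|=n^{r_{\ssstyle{F}}}$, so the image of $\vphi$ has order at least $n^{r_{\ssstyle{F}}-r_{\ssstyle{E}}}$; without the Tamagawa-coprimality hypothesis the image only provably sits in a subgroup of $H^1(\Q,E)[n]$ whose order divides $\#\Sha(E/\Q)\cdot\prod_\ell c_{\ssstyle{E},\ell}$, giving the divisibility. I expect the hardest step to be the careful local analysis at primes of bad reduction, where the mismatch between the component groups of the N\'eron models of $E$ and $F$ must be tracked explicitly; this is precisely the bookkeeping that Theorem~\ref{maintheorem} abstracts in the abelian-variety setting.
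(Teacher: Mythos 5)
Your opening sentence already gives the paper's actual proof: Theorem~\ref{mt} is deduced by applying Theorem~\ref{maintheorem} with $A=E$, $B=F$, $K=\Q$ (the ramification condition $e_p<p-1$ being automatic over~$\Q$ for odd~$p$). Since that is exactly what the paper does, if you intended to simply specialize the main theorem, your proof is correct and is the same as the paper's. The rest of your proposal, however, sketches a direct Galois-cohomological proof, and it is worth comparing that sketch to the paper's flat-cohomology route.

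The construction of $\vphi$ via $F(\Q)/nF(\Q)\hra H^1(\Q,F[n])\isom H^1(\Q,E[n])\to H^1(\Q,E)[n]$, and the bound $\#\ker\vphi\le n^{r_E}$ coming from the Kummer sequence of~$E$ and $\gcd(n,\#E(\Q)_{\tor})=1$, match the paper exactly. Your part~(ii) argument is a genuine alternative packaging: instead of working with $H^1(X,\cA[n])$ over $X=\Spec\Z$, you compare the classical $n$-Selmer groups of $E$ and~$F$ inside the identified $H^1(\Q,E[n])\isom H^1(\Q,F[n])$ via local conditions. This is valid in outline, but at primes $\ell\mid n$ the phrase ``control ramified classes via the formal group'' undersells what is needed: the local condition at $\ell\mid n$ is the finite/flat (Bloch--Kato) condition, and to know it is the \emph{same} on both sides you must upgrade $E[n]\isom F[n]$ as Galois modules to an isomorphism of finite flat group schemes over~$\Z_\ell$. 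That step requires Raynaud's theorem \cite{raynaud-ppp} (applicable here because $e=1<p-1$), which is precisely the lemma the paper proves in Section~\ref{sec:mainthm}; ``semistability and the formal group'' alone do not give it.

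The real gap is in part~(i). You assert that, absent coprimality of~$n$ with $\prod_\ell c_{E,\ell}$, the image of $\vphi$ ``provably sits in a subgroup of $H^1(\Q,E)[n]$ whose order divides $\#\Sha(E/\Q)\cdot\prod_\ell c_{E,\ell}$,'' but you neither identify this subgroup nor prove the order bound, and this is the entire content of part~(i). What the paper does is show $\im(\vphi)$ factors through $H^1(X,\cA)[n]$ and then invoke the exact sequence $0\to\Sha\to H^1(X,\cA)\to\bigoplus_v H^1(\Spec k_v,\Phi_{A,v})$ (sequence~\eqref{kernelsha}), where the right-hand group has order $\prod_v c_{A,v}$; the kernel-image count on $\im(\psi)$ then yields the divisibility. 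A purely local Galois-cohomology version of this would require you to show that at each bad prime $\ell\nmid n$ the image of $H^1_{\ur}(\Q_\ell,E[n])$ in $H^1(\Q_\ell,E)[n]$ has order dividing the $n$-part of $c_{E,\ell}$, and then assemble these local bounds globally. That local statement is true but is itself a N\'eron-model/component-group computation; asserting it without proof leaves part~(i) unjustified, and the ``bookkeeping'' you defer is exactly where the theorem lives.

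A smaller point: in part~(i) you use $F(\Q)[n]\isom E(\Q)[n]=0$ without argument. It does follow from $F[n]\isom E[n]$ over~$\Q$ together with $\gcd(n,\#E(\Q)_{\tor})=1$, so state that explicitly. Finally, note that the paper's flat-cohomology approach (Proposition~\ref{main:prop} and Lemma~\ref{lem}) packages the local-condition comparison into a single statement over $\Spec\Z$, avoiding the prime-by-prime bookkeeping you would otherwise have to carry out; if you want to present a self-contained Selmer-theoretic proof, you should expect to reproduce essentially the same component-group input at the bad primes.
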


It can be verified that the hypotheses in part (ii) of Theorem \ref{mt} are satisfied by most of the pairs $(E,F)$ of elliptic curves listed in Table 1 of \cite{cremona-mazur} and consequently, non-trivial elements of $\Sha(E/\Q)$ are ``explained'' by $F(\Q)$ in these examples.
We now give one such example. 
Consider the optimal elliptic curve $E$=2834D. This curve appears in Table 1 of \cite{cremona-mazur}. We find from the elliptic curve data in \cite{cremona:algs} that $E$ has rank $0$, \mbox{$\#E(\Q)_{\tor}=1$}, $\prod_{l}c_{\scriptscriptstyle{E},l}=1$ and that $E$ has good reduction at $p=5$. We now find that $E[5]\isom F[5]$, where $F$=2834C is an optimal curve of rank $2$ and for which $\#F(\Q)_{\tor}=1$ and $\prod_{l}c_{\scriptscriptstyle{F},l}=24$. Also, $F$ has good reduction at $p=5$. Hence, the triple $(E,F,5)$ satisfies the hypothesis in part (ii) of Theorem \ref{mt} and we conclude that $F(\Q)/5F(\Q)\hra\Sha(E/\Q)[5]$. Since $F(\Q)/5F(\Q)\isom{\Z/5\Z}\oplus{\Z/5\Z}$, this implies that $\Sha(E/\Q)$ has order at least~$25$. Indeed, we find that this agrees with the BSD conjecture which predicts that $\#\Sha(E/\Q)=25$. 

The next example is taken from \cite{stein:tamagawa} and assumes the validity of BSD II. Consider the optimal elliptic curves $E$=114C1 and $F$=57A1. The data in \cite{cremona:algs} shows that $E$ has rank $0$, \mbox{$\#E(\Q)_{\tor}=4$} and $\Sha(E/\Q)$ has trivial conjectural order. On the other hand, we find that $F$ has rank $1$ and $\prod_{l}c_{\ssstyle{F},l}=2$. Furthermore, we have $E[5]\isom F[5]$ over $\Q$. Thus, the triple $(E,F,5)$ satisfies the hypothesis in part (i) of Theorem \ref{mt} and we conclude that $5^{2-1}=5$ divides $\prod_{l}c_{\ssstyle{E},l}$. This agrees with the available data, according to which $\prod_{l}c_{\ssstyle{E},l}=20$.

We now compare our Theorem~\ref{mt} above to two similar results in the literature. 
We have the following special case of Theorem 3.1 proved in \cite{agashe-stein:visibility}, which is also motivated by \cite{cmapp}.

\begin{theorem}[Agashe-Stein]\label{agst}
Let $E$ and $F$ be elliptic curves that are subvarieties of an abelian variety $J$ over $\Q$, and such that $E\cap F$ is finite. Let $N$ be an integer divisible by the primes of bad reduction for $F$. Let $n$ be an odd integer such that
$$gcd\left(n,\;N\cdot\;\#(J/F)(\Q)_{\tor}\cdot\;\#F(\Q)_{\tor}\cdot\;\prod_{\ell}c_{\scriptscriptstyle{E},\ell}\cdot\;\prod_{\ell}c_{\scriptscriptstyle{F},\ell}\right)=1$$
where $c_{\scriptscriptstyle{E},\ell}$ (resp. $c_{\scriptscriptstyle{F},\ell}$) is the Tamagawa number of $E$ (resp. $F$) at a prime $\ell$. Suppose further that $F[n]\subset E$ as subgroups of~$J$. Then there exists a natural map
$$\vphi\;:\;F(\Q)/nF(\Q) \to \Sha(E/\Q)$$
such that $\ker{\vphi}$ has rank at most $n^r$, where $r$ is the rank of~$E$. In particular, if $E(\Q)$ has rank zero, then $\vphi$ is an injection.
\end{theorem}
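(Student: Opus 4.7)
The map $\vphi$ is built via the standard visibility construction. The inclusion $F[n] \subset E$ of Galois submodules of $J$ combined with the Kummer sequence $0 \to F[n] \to F \xrightarrow{[n]} F \to 0$ yields the composition
\[
F(\Q)/nF(\Q) \hookrightarrow H^1(\Q, F[n]) \lra H^1(\Q, E),
\]
which is $\vphi$. Concretely, for $P \in F(\Q)$, choose $Q \in F(\Kbar)$ with $nQ = P$; then $\vphi(P)$ is represented by the cocycle $\sigma \mapsto \sigma Q - Q$, whose values lie in $F[n] \subset E$. (If $P = nP'$, taking $Q = P'$ shows the map factors through $F(\Q)/nF(\Q)$.)

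The next step is to show the image lies in $\Sha(E/\Q)$, i.e., to verify local triviality in $H^1(\Q_v, E)$ at every place $v$. The restriction $\vphi(P)|_v$ factors through $H^1(\Q_v, F[n]) \to H^1(\Q_v, E[n]) \to H^1(\Q_v, E)[n]$, and the last map has kernel equal to the Kummer image of $E(\Q_v)/nE(\Q_v)$; so the task is to show this local class lies in that Kummer image. The archimedean case is immediate because $n$ is odd and $H^1(\R, E)$ is $2$-torsion. At a finite place $\ell$ there are three sub-cases. (i) If $\ell \nmid nN$, both $E$ and $F$ have good reduction and $\ell \nmid n$, so the Kummer class is unramified and the unramified part of $H^1(\Q_\ell, E[n])$ agrees with the Kummer image of $E(\Q_\ell)/nE(\Q_\ell)$. (ii) If $\ell \mid N$ (hence $\ell \nmid n$), the Tamagawa hypothesis $\gcd(n, c_{E,\ell} \cdot c_{F,\ell}) = 1$ kills the contribution of the component groups of the N\'eron models, reducing to the unramified case on the identity component. (iii) If $\ell \mid n$ (hence $\ell \nmid N$, good reduction for both), one argues in flat cohomology over $\Spec \Z_\ell$: the generic-fibre inclusion $F[n] \subset E[n]$ is realised by an inclusion of finite flat group schemes (the $n$-torsion subschemes of the N\'eron models), and the flat Kummer sequence for $E$ produces the required factorization.

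For the kernel bound, suppose $\vphi(P) = 0$, so $\sigma Q - Q = \sigma R - R$ for some $R \in E(\Kbar)$. Then $S := Q - R$ is Galois-invariant, hence in $J(\Q)$, and $e := nR = P - nS$ lies in $E(\Kbar) \cap J(\Q) = E(\Q)$; so $P = e + nS$ with $e \in E(\Q)$ and $S \in J(\Q)$. Hence $\ker \vphi$ embeds (via the natural map to $J(\Q)/nJ(\Q)$) into the intersection of the images of $F(\Q)/nF(\Q)$ and $E(\Q)/nE(\Q)$. The assumption $\gcd(n, \#(J/F)(\Q)_\tor) = 1$ makes $F(\Q)/nF(\Q) \hookrightarrow J(\Q)/nJ(\Q)$ injective, since any $nS \in F(\Q)$ with $S \in J(\Q)$ yields an $n$-torsion class in $(J/F)(\Q)$. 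Hence $|\ker \vphi| \le |E(\Q)/nE(\Q)|$. Finally, $F[n] \subset E$ together with $|F[n]| = n^2 = |E[n]|$ forces $F[n] = E[n]$ as Galois modules, so $E(\Q)[n] = F(\Q)[n] = 0$ by $\gcd(n, \#F(\Q)_\tor) = 1$; therefore $|E(\Q)/nE(\Q)| = n^r$, giving $|\ker \vphi| \le n^r$ as required. The main obstacle is sub-case (iii): translating the generic-fibre inclusion $F[n] \subset E[n]$ into an inclusion of finite flat group schemes over $\Z_\ell$ and then applying flat Kummer theory.
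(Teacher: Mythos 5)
The paper does not prove Theorem~\ref{agst}; it quotes it as a known special case of Theorem~3.1 in \cite{agashe-stein:visibility} and only compares its hypotheses to those of Theorem~\ref{mt}. So there is no ``paper's own proof'' of this particular statement to check you against. That said, your sketch is the standard visibility argument in the Cremona--Mazur/Agashe--Stein style: define $\vphi$ through the Kummer map and the inclusion $F[n]\subset E$, verify local triviality place by place, and bound the kernel inside $J(\Q)/nJ(\Q)$ using the $(J/F)(\Q)_\tor$ and $F(\Q)_\tor$ coprimality conditions. This is genuinely different from the route the present paper takes for its own main result (Theorem~\ref{maintheorem}), which avoids place-by-place checks altogether by working globally with the flat cohomology of the N\'eron model over $\Spec\O_K$, using Mazur's exact sequence (equation~(\ref{kernelsha})) to realise $\Sha$ as the kernel of $H^1(X,\cA)\to\bigoplus_v H^1(\Spec k_v,\Phi_{A,v})$; your case~(iii) is essentially a local instance of the same Raynaud-style flat Kummer theory that the paper deploys globally in Lemma~\ref{kummerlemma} and Proposition~\ref{main:prop}. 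The trade-off: the local-to-global argument is elementary away from $\ell\mid n$ and matches the hypotheses of Theorem~\ref{agst} exactly, while the global flat-cohomology approach of the paper yields the stronger conclusions (part (i) of Theorem~\ref{mt}) at the cost of heavier machinery.

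One inaccuracy worth flagging: in your case (i) you assert that $\ell\nmid nN$ implies good reduction for both $E$ and $F$, but $N$ is only assumed divisible by the bad primes of $F$, so $E$ may well have bad reduction at such $\ell$. The conclusion still holds, but the reason is the one you invoke in case (ii): the unramified $n$-torsion of $H^1(\Q_\ell,E)$ is controlled by $c_{E,\ell}$ (via Lang's theorem and the component group), and $\gcd(n,c_{E,\ell})=1$ forces it to vanish. You should phrase case (i) as ``$\ell\nmid nN$, so $F$ has good reduction, the local class for $F$ is unramified, and then $\gcd(n,c_{E,\ell})=1$ kills the unramified $n$-torsion of $H^1(\Q_\ell,E)$,'' rather than asserting good reduction for $E$ and appealing to the good-reduction identification of the Kummer image with the unramified subgroup.

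Your kernel bound and the reduction $F[n]=E[n]\Rightarrow E(\Q)[n]=0$ are fine as stated. The one genuinely hard point, which you correctly identify and leave open, is promoting the generic-fibre inclusion $F[n]\subset E[n]$ to an inclusion of finite flat group schemes over $\Z_\ell$ at $\ell\mid n$; this is exactly where the present paper invokes Raynaud (\cite{raynaud-ppp}, Cor.~3.3.6) in the proof of the lemma following Theorem~\ref{maintheorem}, and the same reference would close your sub-case~(iii).
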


Comparing Theorem \ref{agst} with Theorem \ref{mt} above, we see that both $E$ and $F$ occurring in Theorem \ref{agst} are required to be contained in some ambient variety $J$, whereas no such conditions are imposed in Theorem \ref{mt}. One consequence of this is that one can check the hypothesis $E(\Q)\isom F(\Q)$ in Theorem \ref{mt} for optimal elliptic curves~$E$ and $F$ of different levels. Next, the hypothesis that $gcd\left(n,\;\#(J/F)(\Q)_{\tor}\cdot\;\#F(\Q)_{\tor}\right)=1$ in Theorem \ref{agst} is not required in Theorem \ref{mt} to define the map $\vphi$. On the other hand, the hypothesis $F[n]\subset E$ in Theorem \ref{agst} is weaker than the hypothesis $F[n]\isom E[n]$, which makes it easier to verify.

We also have the following theorem, which is 
Theorem 6.1 of \cite{dws} specialized to our context:

\begin{theorem}[Dummigan-Stein-Watkins]\label{dws}
Let $E$ and~$F$ be semistable optimal elliptic curves over $\Q$ 
of level~$N$ such that rank$(F)>$ rank$(E)=0$. Suppose $p$ is an odd prime such that $$p\nmid\left(N \cdot \,\prod_{\ell\,|\,N}c_{\scriptscriptstyle{F},\ell}\right), $$ and such that 
$E[p]$ and~$F[p]$ are irreducible and isomorphic over~$\Q$.
Then $p$ divides $\#\Sha(E/\Q)$.
\end{theorem}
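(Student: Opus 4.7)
The plan is to derive the theorem as a corollary of part~(i) of Theorem~\ref{mt}, with an auxiliary step that uses the irreducibility of $E[p]$ to strip off the Tamagawa factors of $E$ from the resulting divisibility.

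First, I would verify that the triple $(E,F,p)$ satisfies the hypotheses of Theorem~\ref{mt}(i) with $n=p$. Semistability is given and $p$ is odd by assumption. For the gcd condition $\gcd(p,\,N\cdot \#E(\Q)_{\tor}\cdot \prod_\ell c_{F,\ell})=1$: we have $p\nmid N$ by hypothesis; since $F$ has good reduction outside $N$, $\prod_\ell c_{F,\ell}=\prod_{\ell\mid N}c_{F,\ell}$, which is coprime to $p$ by hypothesis; and the irreducibility of $E[p]$ over $\Q$ forces $E(\Q)[p]=0$ (any non-zero $\Q$-rational $p$-torsion point would span a $G_\Q$-invariant line), hence $p\nmid \#E(\Q)_{\tor}$. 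Applying Theorem~\ref{mt}(i) yields $p^{r_F}\mid \#\Sha(E/\Q)\cdot\prod_\ell c_{E,\ell}$; since $r_F\geq 1$ this gives $p\mid \#\Sha(E/\Q)\cdot\prod_\ell c_{E,\ell}$.

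To upgrade this to $p\mid \#\Sha(E/\Q)$, it remains to show $p\nmid \prod_\ell c_{E,\ell}$. Since $E$ has good reduction outside $N$, only $\ell\mid N$ contribute, and since $p$ is odd, non-split multiplicative primes (where $c_{E,\ell}\in\{1,2\}$) are automatically safe. At a split multiplicative prime $\ell\mid N$, $c_{E,\ell}=v_\ell(q_E)$ for the Tate period $q_E$; if $p\mid c_{E,\ell}$, then the Tate uniformization exhibits $\mu_p$ as a Galois-stable subgroup of $E[p]$ over $\Q_\ell$ with unramified quotient, so $E[p]$ is unramified at $\ell$. The main obstacle is promoting this local unramifiedness into a global contradiction with the irreducibility of $E[p]$. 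The expected tool is Ribet's level-lowering theorem: an irreducible mod-$p$ representation coming from a newform of level $N$ and unramified at a prime $\ell \| N$ descends to a newform of level $N/\ell$. Iterating over the primes $\ell$ where $p\mid c_{E,\ell}$ should produce a contradiction (either because such repeated descent eventually reaches a level admitting no weight-two newforms, or via Mazur's work on Eisenstein primes that constrains semistable irreducible mod-$p$ representations of small conductor), yielding $p\nmid \prod_\ell c_{E,\ell}$ and hence the desired $p\mid \#\Sha(E/\Q)$. I expect this Ribet-style descent step to be the most delicate part of the argument.
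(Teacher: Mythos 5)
The paper does not actually prove this statement: it is quoted (as Theorem~6.1 of the cited reference \cite{dws}, specialized to elliptic curves) solely for comparison with the authors' own Theorem~\ref{mt}, and no proof of it appears anywhere in the article. Your proposal is therefore an independent attempt to rederive the result from Theorem~\ref{mt}(i). The first portion of your argument is sound: the hypothesis verification is correct (irreducibility of $E[p]$ does force $p\nmid\#E(\Q)_{\tor}$, and $\prod_\ell c_{F,\ell}=\prod_{\ell\mid N}c_{F,\ell}$ since $c_{F,\ell}=1$ at good primes), and you correctly obtain $p\mid\#\Sha(E/\Q)\cdot\prod_\ell c_{E,\ell}$ from part~(i).

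The gap is the final step, where you need $p\nmid\prod_\ell c_{E,\ell}$ in order to strip off the Tamagawa factor. The Ribet level-lowering argument you sketch does not yield a contradiction. Ribet's theorem says that the irreducible $\bar\rho=E[p]$, being unramified at a prime $\ell\,\|\,N$, \emph{also} arises from a newform of level $N/\ell$; this is an existence statement for a lower-level form, not a non-existence statement for the original one. The newform of level $N$ attached to $E$ continues to exist -- it is merely congruent mod $p$ to some lower-level newform, which is exactly the congruence phenomenon this whole paper studies and is quite common. So ``iterating descent until there are no newforms left'' is not a valid strategy; moreover, the descent cannot push the level below the Serre conductor $N(\bar\rho)$, which need not be $1$. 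A sharper local approach -- observing that if $E$ is split and $F$ non-split multiplicative at $\ell$, with both $E[p]$ and $F[p]$ unramified there, then the respective Frobenius eigenvalue multisets $\{1,\ell\}$ and $\{-1,-\ell\}$ modulo $p$ should clash -- also runs into trouble in the edge case $\ell\equiv-1\pmod p$, where the two multisets actually coincide. In short, proving $p\nmid\prod_\ell c_{E,\ell}$ under the stated hypotheses is the real mathematical content of this route, and your sketch does not close it; you should either supply a correct argument for that step or simply cite \cite{dws} as the paper does.
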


Note that the condition that 
$E[p]$ and~$F[p]$ are irreducible is stronger than the requirement 
in our Theorem~\ref{mt} that $p$ does not divide $\#E(\Q)_{\tor}$.
Also, the theorem above says that a prime divides $\#\Sha(E/\Q)$
under certain hypotheses, whereas
our theorem shows that an integer (that is not necessarily prime)
divides $\#\Sha(E/\Q)$, 
under weaker hypotheses.

Finally, note that neither of Theorems  \ref{agst} or \ref{dws} have
anything analogous to part~(i) of our Theorem~\ref{mt}, which
shows that under certain hypotheses, an integer divides
the product $\left(\#\Sha(E/\Q)\cdot\;\prod_{\ell}c_{\scriptscriptstyle{E},\ell}\right)$, which appears in the numerator on the right side of the second
part of the BSD conjecture (Conjecture~\ref{bsd2}).\footnote{After we had proved Theorem~\ref{mt}, we came to know that William Stein had sketched 
a proof of a result similar to part~(i) of our Theorem~\ref{mt} in \cite{stein:tamagawa}. His techniques are different from ours, in that
he uses group cohomology while we use flat cohomology -- there
is an analogy between the approaches though.}

\begin{proposition} \label{prop1}
Assume the hypotheses of part~(i) of Theorem~\ref{mt}, and suppose
in addition that 
$n$ is prime,
$r_E = 0$, and 
$E$ and~$F$ are optimal elliptic curves of conductor~$N$,
Then $n$
divides the quantity
$\frac{L_{E,\Q}(1)}{\Omega_E}\,\cdot\,{(\#E(\Q)_{\tor})^2}$.
\end{proposition}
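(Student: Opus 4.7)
The plan is to combine the unconditional divisibility furnished by part~(i) of Theorem~\ref{mt} with one divisibility in the $n$-part of the second Birch and Swinnerton-Dyer formula, known in the analytic rank zero case thanks to Kato. Since $r_E = 0$ and $r_F > r_E$, we have $r_F - r_E \geq 1$, and part~(i) of Theorem~\ref{mt} applied with this exponent yields the unconditional divisibility
$$n \,\mid\, \#\Sha(E/\Q)\cdot\prod_\ell c_{E,\ell}.$$

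Next, because $E$ has Mordell--Weil rank zero and is modular, the work of Kolyvagin and Gross--Zagier implies $L_{E,\Q}(1)\neq 0$, so the analytic rank of $E$ is also zero. Kato's theorem, proving one divisibility in the main conjecture of Iwasawa theory via the Euler system of Beilinson--Kato, then gives that the $n$-adic valuation of the algebraic side of the BSD formula is bounded above by the $n$-adic valuation of its analytic side, namely
$$\ord_n\!\left(\#\Sha(E/\Q)\cdot\prod_\ell c_{E,\ell}\right) \,\leq\, \ord_n\!\left(\frac{L_{E,\Q}(1)}{\Omega_E}\cdot\bigl(\#E(\Q)_{\tor}\bigr)^2\right).$$
Chaining this with the previous divisibility yields the desired conclusion $n \mid \frac{L_{E,\Q}(1)}{\Omega_E}\cdot(\#E(\Q)_{\tor})^2$.

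The main obstacle will be verifying that Kato's theorem applies at the particular prime $n$ under our precise hypotheses. Here our assumptions are well suited to this: $n$ is odd, $n\nmid N$ yields good reduction of $E$ at $n$, the semistability of $E$ ensures that the Manin constant is a unit at $n$, and the coprimality $\gcd(n,\#E(\Q)_{\tor})=1$ together with the Galois-module isomorphism $E[n]\isom F[n]$ constrain the image of the mod-$n$ representation. Small primes (notably $n=3$) and special reduction types may require care, but all such cases fall within the scope of the refined versions of Kato's theorem available in the literature, so the invocation is legitimate and the combination with part~(i) completes the proof.
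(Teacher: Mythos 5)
Your argument takes a genuinely different route from the paper, and it has a gap. The paper's proof is short and does not pass through Kato at all: since $E$ and $F$ are optimal of the same conductor $N$, they sit inside $J_0(N)$; the isomorphism $E[n]\isom F[n]$ of Galois modules forces the attached newforms to be congruent modulo $n$; and then the divisibility of $\frac{L_{E,\Q}(1)}{\Omega_E}\cdot(\#E(\Q)_{\tor})^2$ by $n$ is exactly the content of Proposition~1.5 of \cite{agashe:visfac}, a result about congruences of modular forms and algebraic parts of special $L$-values. No Euler system input is used, and indeed part~(i) of Theorem~\ref{mt} plays no role in the paper's proof of this proposition -- the whole point is to verify, by an independent argument on the analytic side, the divisibility that part~(i) predicts on the algebraic side.

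The gap in your version is the invocation of Kato's divisibility. To descend Kato's Iwasawa-theoretic bound to the inequality
$\ord_n\bigl(\#\Sha(E/\Q)\cdot\prod_\ell c_{E,\ell}\bigr)\le\ord_n\bigl(\frac{L_{E,\Q}(1)}{\Omega_E}\cdot(\#E(\Q)_{\tor})^2\bigr)$
one needs hypotheses on the mod-$n$ Galois representation that the proposition does not assume -- typically irreducibility of $E[n]$ (or surjectivity/largeness of the image of $\rho_{E,n}$), and one must also control the Euler/interpolation factor relating the $p$-adic $L$-function to $L_{E,\Q}(1)/\Omega_E$ (anomalous ordinary and supersingular primes need separate treatment). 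The paper explicitly points out, in comparing with Theorem~\ref{dws}, that the present hypothesis $\gcd(n,\#E(\Q)_{\tor})=1$ is \emph{strictly weaker} than irreducibility of $E[n]$; so irreducibility simply is not available here, and your concluding remark that ``all such cases fall within the scope of refined versions of Kato's theorem'' does not close the gap. Even setting that aside, your route is circular in spirit: you prove the claimed $n$-divisibility on the analytic side by combining the algebraic-side divisibility from part~(i) with a deep theorem towards BSD, whereas the paper's whole purpose is to obtain the analytic-side divisibility \emph{independently} so that agreement with part~(i) constitutes evidence for BSD.
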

\begin{proof}
We may view $E$ and~$F$ as abelian subvarieties of~$J_0(N)$.
Then the hypothesis that $F[n]\isom E[n]$ implies
that the newforms corresponding to~$E$ and~$F$ are congruent
modulo~$n$.
Our result then follows by~\cite[Prop~1.5]{agashe:visfac}. 
\end{proof}
The second part of BSD conjecture  predicts
that the quantity
$\frac{L_{E,\Q}(1)}{\Omega_E}\,\cdot\,{(\#E(\Q)_{\tor})^2}$
is equal to 
the product $\left(\#\Sha(E/\Q)\cdot\;\prod_{\ell}c_{\scriptscriptstyle{E},\ell}\right)$. Under the conditions of Proposition~\ref{prop1},
part~(i) of our Theorem~\ref{mt} shows that 
$n$ does indeed divide the latter product, 
and thus provides new theoretical evidence
for the conjecture. 

While we have focussed on elliptic curves for simplicity so far,
we remark that the statement of Theorem \ref{mt} remains valid when the elliptic curves $E$ and $F$ are replaced by abelian varieties, 
and $\Q$ is replaced by a number field~$K$, provided that for every prime~$p$ dividing~$n$, the largest ramification index of any prime ideal of~$K$ lying over~$p$ is less than $p-1$ (see Theorem~\ref{maintheorem}). 
In fact, in the rest of this article, we will work directly with 
abelian varieties over number fields. 

The proof of our main theorem (Theorem~\ref{maintheorem}) uses
the interpretation of 
the Shafarevich-Tate group of 
an abelian variety in terms of
the flat cohomology groups of its N{\'e}ron model; we discuss this
interpretation (following Mazur~\cite{mazur:towers})
in Section~\ref{sec:neron}.
In Section~\ref{sec:kummer},
we prove some results that will be used in the proof
of the main theorem (Theorem~\ref{maintheorem}), which
is stated and proved in Section~\ref{sec:mainthm}.
\vspace{0.1 in}

\noindent {\it Acknowledgements:} 
The first author is grateful 
to B.~Mazur and B.~Conrad for answering some questions
that came up when writing this article. The second author
is grateful to E.~Aldrovandi and Mark van Hoeij for many
helpful clarifications as well as feedback. Thanks also to
James Borger, Kevin Buzzard, Pete Clark, Matthew Emerton and
Keerthi Sampath for their careful explanations of several key
concepts.

\section{ Flat Cohomology of N{\'e}ron Models}\label{sec:neron}

In this section, following Mazur~\cite{mazur:towers}, 
we express the 
Shafarevich-Tate group of 
an abelian variety in terms of
the flat cohomology groups of its N{\'e}ron model.

Let $K$ be a number field with ring of integers $\O_K$ and let $A/K$ be an abelian variety over $K$. Let $\cA$ 
denote the \emph{N{\'e}ron model} of $A/K$ over $X=\Spec{\O_K}$. Thus $\cA$ is separated and of finite type over $X$ with generic fiber $A$, and satisfies the \emph{N{\'e}ron mapping property}: for each smooth $X$-scheme $S$ with generic fiber $S_{K}$, the restriction map ${\Hom_{X}(S,{\cA})\ra\Hom_{K}(S_{K},A)}$ is bijective. 
There is an alternative definition of N{\'e}ron models that will be useful for our purpose. Recall that (\cite[\S II.1]{milne:etale}) an abelian variety $A$ defines a sheaf (which we also denote as $A$) for the smooth ({\'e}tale) topology over $\Spec{K}$. Consider the direct-image sheaf $j_{*}(A)$ for the smooth (or {\it{fpqf}} or {\'e}tale) topology over $X=\Spec{\O_K}$, where $j:\Spec{K}\hra{X}$ is the inclusion of the generic point. When the sheaf $j_{*}(A)$ is \emph{representable}, the smooth scheme $\cA\to X$ representing $j_{*}(A)$ will be called a N{\'e}ron model of $A$. Abusing notation, we identify the functor $j_{*}(A)$ itself as the N{\'e}ron model of $A$ and write $\cA=j_{*}(A)$. We then find that the \emph{N{\'e}ron mapping property} is equivalent to the isomorphism $\cA\isom{j_{*}j^{*}\cA}$ which follows directly from the fact that the functors $j_{*}$ and $j^{*}$ are \emph{adjoint} to each other. For each closed point (i.e., prime) $v$ in $X$, the fiber  $\cA_{v}={\cA}\cross_{X}{\Spec{k_v}}$ is a smooth commutative group scheme over $k_v$, where $k_v$ is the residue field of the local ring ${\O}_{X,v}$ at $v$ or equivalently, the residue field of the ring of integers ${\O}_{K,v}$ of $K_v$, the completion of $K$ at $v$. 
Let $\cA_{v}^{0}\subset{\cA_{v}}$ be the connected component of $\cA_v$ that contains the identity (also called the \emph{identity component}). Let $Z_{v}$ be the complement of $\cA_{v}^{0}$ in $\cA_{v}$. We find that $Z_{v}$ is nonempty for only a finite number of closed points $v$ in $X$ since $A$ has good reduction at almost all primes. Thus $Z=\bigcup_{v}Z_{v}$ is a closed subscheme of $\cA$ and we let $\cA^{0}\subset{\cA}$ be its open complement, which is easily seen to be a connected open smooth subgroup scheme of $\cA$. Consider the short exact sequence of sheaves for the flat topology (since both $\cA^{0}$ and $\cA$ are smooth group schemes) over $X$: 
\begin{equation}\label{SES}
0\to \cA^{0} \to {\cA} \to {\Phi}_A \to {0}
\end{equation}
where ${\Phi}_A$ (the quotient of $\cA$ by $\cA^{0}$) is considered as a \emph{skyscraper sheaf}: it is zero outside of the finite set of points $v$ in $X$ where $\cA_{v}$ is disconnected i.e. those primes $v$ at which $A$ has \emph{bad reduction}. If we regard $\Phi_A$ as an {\'e}tale sheaf over $X$ and denote by $\Phi_{A,v}$ its stalk at a prime $v$, then $\Phi_{A,v}$ can be considered as a finite, {\'e}tale group scheme over $\Spec{k_v}$. Equivalently, $\Phi_{A,v}$ is a finite abelian group equipped with an action of $\Gal({\kbar_v}/{k_v})$. Over $\Spec{k_v}$, we thus have an exact sequence of group schemes 

\begin{equation}\nonumber
0 \to \cA_v^0 \to \cA_v \to \Phi_{A,v} \to 0
\end{equation}


The group scheme $\Phi_{A,v}={\cA_{v}}/{\cA_{v}^{0}}$ of connected components is called the \emph{component group} of $\cA$ at $v$ and $c_{A,v}=\#{\Phi_{A,v}(k_v)}$ is called the \emph{Tamagawa number} of $A$ at $v$. Now consider the natural closed immersion $i_{v}:\Spec{k_v}\hra{X}$. We have that 

\begin{equation}\label{compdef}
\Phi_A=\bigoplus_{v}(i_{v})_{*}{{\Phi}_{A,v}}
\end{equation}
where the direct sum is taken over all $v$ or equivalently, over the finite set of $v$ where $A$ has bad reduction.

 
The short exact sequence (\ref{SES}) of {\'e}tale (or, flat) sheaves over $X$ induces a long exact sequence of flat (or {\'e}tale) cohomology groups:
\begin{equation}\label{LES}
0 \to \cA^{0}(X) \to \cA(X) \to \Phi_A(X)
\to H^1(X,\cA^0) \to H^1(X,\cA) \to H^1(X,\Phi_A)
\end{equation}
where we can write, for all $i$,
\begin{equation}\label{component}
H^i(X,\Phi_A)=\bigoplus_{v}H^i(\Spec{k_v},\Phi_{A,v})
\end{equation}
which follows from (\ref{compdef}). Now the N{\'e}ron mapping property implies that $\cA(X)\isom A(K).$ Now let $K_{v}$ be the completion of $K$ with respect to the valuation defined by the prime $v$. We denote this valuation also as $v$. Consider, for a given $v$, the map $$w_{v}:\;\;H^1(\Spec{K},A_{K}) \to H^1(\Spec{K_{v}},A_{K_{v}})$$ which corresponds to the usual restriction map for Galois cohomology:
$${\textrm{Res}}_{v}:\;\;H^1(K,A) \lra H^1(K_{v},A)$$ 
Let us define two subgroups of $H^1(\Spec{K},A_{/K})$ (or, of $H^1(K,A)$):
\begin{align*}
\Sigma &= \bigcap{\ker(w_{v})}=\bigcap{\ker({\textrm{Res}}_{v})},\;\;\;{\textrm{nonarchimedean}}\;v\\
\Sha &= \bigcap{\ker(w_{v})}=\bigcap{\ker({\textrm{Res}}_{v})},\;\;\;{\textrm{all}}\;v
\end{align*}
where $\Sha=\Sha(A/K)$ is the \emph{Shafarevich-Tate group} of $A$. These two subgroups fit into an exact sequence of the form
\begin{equation}\label{shasigma}
0 \lra \Sha \lra \Sigma \lra \bigoplus_{{\textrm{real}}\,v}H^1(K_{v},\,A({\Kbar}_{v}))
\end{equation}
where we sum the Galois cohomology groups on the right over all real archimedean valuations $v$ or equivalently, over the set of those real $v$ for which $A({\Kbar}_{v})$ is disconnected. Thus if $A({\Kbar}_{v})$ is connected for all real valuations $v$ of $K$, then $\Sigma\isom\Sha$. The sequence (\ref{shasigma}) may now be written as 
\begin{equation}\nonumber
0 \lra \Sha \lra \Sigma \lra {\Sigma}/{\Sha} \lra 0
\end{equation}
where ${\Sigma}/{\Sha}$ is a finite group of exponent two. In particular, for an odd prime $p$, we have $$\Sigma[p^{\infty}]\isom\Sha[p^{\infty}]$$ i.e. the $p$-primary component of $\Sigma$ and $\Sha$ are equal. Mazur has shown (see the Appendix to \cite{mazur:towers}) that 
\begin{equation} \nonumber 
\Sigma\isom{\textrm{Im}}\left[H^1(X,\cA^{0})\lra H^1(X,\cA)\right]
\end{equation} 
Thus, the exact sequence (\ref{LES}) becomes
\begin{equation}\label{mainseq}
0 \to \cA^0(X) \to A(K) \to \bigoplus_{v}\Phi_{A,v}(k_v) \to H^1(X,\cA^0) \to \Sigma \to 0
\end{equation}
In particular, staying away from $2$-primary components, we note that $\Sha$ may be expressed in two ways. First, the sequence 
\begin{equation}\nonumber 
\bigoplus_{v}\Phi_{A,v}(k_v) \to H^1(X,\cA^0) \to \Sha \to 0
\end{equation}
identifies $\Sha$ as a cokernel. Secondly, the sequence 
\begin{equation}\label{kernelsha}
0 \to \Sha \to H^1(X,\cA) \to \bigoplus_{v}H^1(\Spec{k_v},\Phi_{A,v}) 
\end{equation}
identifies $\Sha$ as a kernel.

\section{Kummer Theory of N{\'e}ron models}
\label{sec:kummer}

In this section, we prove some results that will be used in the proof
of the main theorem (Theorem~\ref{maintheorem}) in Section~\ref{sec:mainthm}.
The reader who is primarily interested in the proof of the main theorem
may safely skip the proofs of the results in this section on a first reading.

Multiplication by $n$ on the N{\'e}ron model $\cA$ produces a Kummer sequence involving flat cohomology groups analogous to the usual Kummer sequence involving Galois cohomology groups. This is explained by the next lemma. We let $\cA'$ be the inverse image of $n\Phi_A\subset\Phi_A$ under the natural surjection $\cA\to\Phi_A$ as in (\ref{SES}).

\begin{lemma}\label{kummerlemma}
Let $A$ be a semi-abelian variety over a number field $K$. Let $n$ be an odd integer and let $\cA[n]$ be the $n$-torsion of the N{\'e}ron model $\cA$ of $A$ over $X=\Spec{\O}_K$. Then there is an exact sequence
$$0 \to \cA'(X)/n\cA(X) \to H^1(X,\cA[n]) \to H^1(X,\cA)[n] \to 0.$$
\end{lemma}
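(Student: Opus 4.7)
The plan is to obtain the desired sequence as the long exact sequence of flat cohomology associated to a Kummer-type short exact sequence of fppf sheaves on $X$, after first identifying $\cA'$ with the fppf image $n\cA$ of multiplication by $n$ on $\cA$.

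First I would verify the identification $n\cA = \cA'$. The hypothesis that $A$ is semi-abelian forces $\cA^0$ to be a semi-abelian scheme over $X$, so multiplication by $n$ is an fppf epimorphism on $\cA^0$, i.e.\ $\cA^0/n\cA^0 = 0$ as an fppf sheaf. Applying the snake lemma to the multiplication-by-$n$ diagram on the SES $0 \to \cA^0 \to \cA \to \Phi_A \to 0$ yields both the $n$-torsion SES $0 \to \cA^0[n] \to \cA[n] \to \Phi_A[n] \to 0$ and an isomorphism $\cA/n\cA \cong \Phi_A/n\Phi_A$ of cokernels. Since $\cA' = \ker(\cA \to \Phi_A/n\Phi_A)$ by construction, this identifies $\cA' = n\cA$ and produces the key fppf SES
$$0 \to \cA[n] \to \cA \xrightarrow{n} \cA' \to 0. \qquad (\star)$$

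Taking the long exact sequence of flat cohomology of $(\star)$ yields the initial portion
$$\cA(X) \xrightarrow{n} \cA'(X) \to H^1(X, \cA[n]) \xrightarrow{j_*} H^1(X, \cA) \xrightarrow{n_*} H^1(X, \cA'),$$
from which exactness at $H^1(X, \cA[n])$ immediately gives $\ker(j_*) = \cA'(X)/n\cA(X)$. Because the composition $\cA \xrightarrow{n} \cA' \hookrightarrow \cA$ equals multiplication by $n$ on $\cA$, the composite $\iota_* \circ n_*$ is multiplication by $n$ on $H^1(X, \cA)$, so $\ker(n_*) \subseteq H^1(X, \cA)[n]$; hence $j_*$ factors through the desired map $\vphi\colon H^1(X, \cA[n]) \to H^1(X, \cA)[n]$ with $\ker(\vphi) = \cA'(X)/n\cA(X)$.

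The hard part---and the main obstacle---is surjectivity of $\vphi$, equivalently, that $\ker(n_*) = H^1(X, \cA)[n]$. For $c \in H^1(X, \cA)[n]$ we have $n_*(c) \in \ker(\iota_*) = \im(\partial)$ for the connecting map $\partial\colon H^0(X, \Phi_A/n\Phi_A) \to H^1(X, \cA')$ coming from $0 \to \cA' \to \cA \to \Phi_A/n\Phi_A \to 0$, and one must show that $n_*(c) = 0$. My approach would be to work with the hypercohomology of the two-term complex $K = [\cA \xrightarrow{n} \cA]$; its two natural filtrations yield the pair of short exact sequences
$$0 \to H^1(X, \cA[n]) \to \mathbb{H}^1(X, K) \to \ker(d_2) \to 0,$$
$$0 \to A(K)/nA(K) \to \mathbb{H}^1(X, K) \to H^1(X, \cA)[n] \to 0,$$
where $d_2\colon H^0(X, \cA/n\cA) \to H^2(X, \cA[n])$ is a hypercohomology differential. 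Chasing these two extensions and identifying the induced map $A(K)/nA(K) \to \ker(d_2) \subseteq H^0(X, \Phi_A/n\Phi_A)$ with the natural reduction $a \mapsto \bar a$ on component groups should produce the claimed four-term exact sequence; the delicate step is verifying that this reduction hits all of $\ker(d_2)$, which is where the semi-abelian hypothesis is used a second time to force the relevant obstruction to vanish.
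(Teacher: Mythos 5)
Your argument matches the paper's exactly through the identification $n\cA = \cA'$: the snake-lemma argument on the multiplication-by-$n$ diagram, with semi-abelianness used to kill $\cA^0/n\cA^0$, gives $\cA/n\cA \cong \Phi_A/n\Phi_A \cong \cA/\cA'$ and hence the Kummer sequence $0 \to \cA[n] \to \cA \to \cA' \to 0$; the identification of the kernel as $\cA'(X)/n\cA(X)$ is also the same. You are right that the remaining point --- surjectivity of $H^1(X,\cA[n]) \to H^1(X,\cA)[n]$ --- is where the content lies: the long exact sequence gives $\im(j_*) = \ker(n_*)$, and since $\iota_*\circ n_* = n$ one only gets the inclusion $\ker(n_*) \subseteq H^1(X,\cA)[n]$, with the discrepancy controlled by $\ker(\iota_*) = \im\bigl((\Phi_A/n\Phi_A)(X) \to H^1(X,\cA')\bigr)$. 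The paper's proof does not address this point explicitly; it simply asserts that the long exact sequence of the Kummer sequence ``yields'' the displayed four-term sequence, so your instinct to flag it is correct.

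The problem is that your hypercohomology plan does not close the gap. The two filtration sequences for $\mathbb{H}^1\bigl(X,[\cA\xrightarrow{n}\cA]\bigr)$ are correct, but chasing them reduces surjectivity of $\vphi$ to surjectivity of the reduction map $A(K)/nA(K) \to \ker(d_2) \subseteq H^0(X,\Phi_A/n\Phi_A)$, which is the same obstruction in different clothing. Your claim that semi-abelianness ``is used a second time to force the relevant obstruction to vanish'' is unsubstantiated: semi-abelianness was already spent in proving $n\cA=\cA'$, and it gives no control over the image of $\cA(X)$ in $(\Phi_A/n\Phi_A)(X)$. What actually makes the surjectivity go through where the lemma is applied is the extra hypothesis of Proposition~\ref{main:prop}, namely $\gcd\bigl(n,\prod_v c_{\ssstyle A,v}\bigr)=1$, which makes $\Phi_A/n\Phi_A$ cohomologically trivial over $X$; then $\iota_* : H^i(X,\cA') \to H^i(X,\cA)$ is an isomorphism for all $i$, so $n_*$ becomes honest multiplication by $n$ and $\ker(n_*) = H^1(X,\cA)[n]$ is immediate. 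Your proof as written therefore stops short at exactly the step you yourself label delicate, and needs either that Tamagawa hypothesis or some replacement argument to be complete.
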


\begin{proof}
By definition of $\cA'$, we have an exact sequence 
\begin{equation}\label{ses}
0 \to \cA' \to \cA \to {\Phi_A}/{n\Phi_A} \to 0
\end{equation}
of flat (or {\'e}tale) group schemes over $X$. In particular, we have an isomorphism $$\cA/\cA'\isom\Phi_A/n\Phi_A.$$ Now consider the multiplication-by-$n$ map on the exact sequence (\ref{SES}). We have a commutative diagram
$$\xymatrix{
& 0\ar[d] & 0\ar[d] & 0\ar[d] & \\
& \cA^0[n]\ar[d] & \cA[n]\ar[d] & \Phi_A[n]\ar[d] & \\
0\ar[r] & \cA^0\ar[r]\ar[d]^{n} & \cA\ar[r]\ar[d]^{n} & \Phi_A\ar[r]\ar[d]^{n} & 0\\
0\ar[r] & \cA^0\ar[r]\ar[d] & \cA\ar[r]\ar[d] & \Phi_A\ar[r]\ar[d] & 0\\
& \cA^0/n\cA^0\ar[d] & \cA/n\cA\ar[d] & \Phi_A/n\Phi_A\ar[d] & \\
& 0 & 0 & 0 &
}$$
and the snake lemma gives an exact sequence $$0 \to \cA^0[n] \to \cA[n] \to \Phi_A[n] \to \cA^0/n\cA^0 \to \cA/n\cA \to \Phi_A/n\Phi_A \to 0$$ Now $A$ is semi-abelian. Since multiplication by $n$ is surjective on tori as well as abelian varieties, it follows that it is also surjective on $\cA_v^0$ for all $v$. Consequently, the multiplication-by-$n$ map is surjective on $\cA^0$ so that $\cA^0/n\cA^0$ is trivial. The exact sequence above then implies that $\cA/n\cA\isom\Phi_A/n\Phi_A\isom\cA/\cA'$ so that we have $n\cA\isom\cA'$. It follows that we also have an exact sequence
\begin{equation}\label{ses:flat}
0 \to \cA[n] \to \cA \overset{n}{\to} \cA' \to 0
\end{equation}
of flat group schemes over $X$. The corresponding long exact sequence of flat cohomology groups yields the exact sequence $$0 \to \cA'(X)/n\cA(X) \to H^1(X,\cA[n]) \to H^1(X,\cA)[n] \to 0.$$ 
\end{proof}

Since the multiplication-by-$n$ map $\cA^0 \overset{n}{\to} \cA^0$ on $\cA^0$ is surjective, we have an exact sequence $$0 \to \cA^0[n] \to \cA^0 \to \cA^0 \to 0$$ of flat group schemes over $X$. The corresponding long exact sequence of flat cohomology groups yields the exact sequence $$0 \to \cA^0(X)/n\cA^0(X) \to H^1(X,\cA^0[n]) \to H^1(X,\cA^0)[n] \to 0$$
The Kummer sequence given above can be suitably modified under certain constraints on the Tamagawa numbers of $A$, as in the next proposition.

\begin{proposition}\label{main:prop}
Let $A$ be a semi-abelian variety over a number field $K$. Let $n$ be an odd integer such that
$$gcd\left(n,\;\prod_{v}c_{\scriptscriptstyle{A},v}\right)=1$$ Let $\widetilde{\cA[n]}\subset\cA[n]$ be any open quasi-finite subgroup scheme of $\cA[n]$ containing $\cA[n]^0:=\cA^0\cap\cA[n]$. Then there is an exact sequence
$$0 \to A(K)/nA(K) \to H^1(X,\widetilde{\cA[n]}) \to \Sha(A/K)[n] \to 0.$$
\end{proposition}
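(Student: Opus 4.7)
The plan is to construct a short exact sequence of flat group schemes on $X$ playing, relative to $\widetilde{\cA[n]}$, the same role that the sequence $0\to\cA[n]\to\cA\to\cA'\to 0$ plays in Lemma~\ref{kummerlemma}, and then to identify the outer terms of the resulting cohomology extraction with $A(K)/nA(K)$ and $\Sha(A/K)[n]$.

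First, I would set $G:=\widetilde{\cA[n]}/\cA^0[n]\subseteq\Phi_A[n]$ and let $\widetilde{\cA}$ be the preimage of $G\subseteq\Phi_A$ under $\cA\to\Phi_A$, so that $\widetilde{\cA}$ is an open subgroup scheme of $\cA$ with $\widetilde{\cA}/\cA^0=G$. A snake-lemma chase applied to multiplication by $n$ on $0\to\cA^0\to\widetilde{\cA}\to G\to 0$ shows $\widetilde{\cA}[n]=\widetilde{\cA[n]}$; since $nG=0$ while $n$ is surjective on the semi-abelian $\cA^0$, multiplication by $n$ on $\widetilde{\cA}$ has image exactly $\cA^0$. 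This produces the short exact sequence of flat sheaves
$$
0\to\widetilde{\cA[n]}\to\widetilde{\cA}\xrightarrow{n'}\cA^0\to 0,
$$
whose long exact sequence of flat cohomology yields
$$
0\to \cA^0(X)/n\widetilde{\cA}(X)\to H^1(X,\widetilde{\cA[n]})\to \ker\!\bigl[H^1(X,\widetilde{\cA})\xrightarrow{(n')_*}H^1(X,\cA^0)\bigr]\to 0. \qquad(\ast)
$$

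For the left term of $(\ast)$, the inclusion $\widetilde{\cA}(X)/\cA^0(X)\hookrightarrow G(X)\subseteq\Phi_A(X)$ shows that its order divides $\prod_v c_{A,v}$ and is hence coprime to $n$. Then $n\widetilde{\cA}(X)\subseteq\cA^0(X)$ because $nG=0$, and the induced map $\widetilde{\cA}(X)/\cA^0(X)\to \cA^0(X)/n\cA^0(X)$, $x\mapsto nx$, is zero (its domain has order coprime to $n$ and its codomain is $n$-torsion), forcing $n\widetilde{\cA}(X)=n\cA^0(X)$. The identical argument applied to $0\to \cA^0(X)\to A(K)\to T\to 0$, with $T:=\im[A(K)\to\Phi_A(X)]$ of order coprime to $n$, yields $\cA^0(X)/n\cA^0(X)\cong A(K)/nA(K)$.

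For the right term, the crucial input is that over a finite field $|H^1(\Spec k_v,M)|=|H^0(\Spec k_v,M)|$ for any finite Galois module $M$, so that $|H^1(X,G)|$ divides $|G(X)|$ and is coprime to $n$; combined with $nG=0$ this forces $H^1(X,G)=0$. The long exact sequence of $0\to\cA^0\to\widetilde{\cA}\to G\to 0$ then gives a surjection $f_1:H^1(X,\cA^0)\twoheadrightarrow H^1(X,\widetilde{\cA})$ whose kernel has order coprime to $n$, and since $(n')_*\circ f_1=[n]$ on $H^1(X,\cA^0)$ a short diagram chase yields $\ker(n')_*\cong H^1(X,\cA^0)[n]$. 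Finally, the exact sequence~(\ref{mainseq}) presents $\Sigma$ as the quotient of $H^1(X,\cA^0)$ by the image of $\bigoplus_v\Phi_{A,v}(k_v)$, again of order coprime to $n$, so $H^1(X,\cA^0)[n]\cong\Sigma[n]=\Sha(A/K)[n]$, where the last equality uses that $\Sigma/\Sha$ is $2$-torsion and $n$ is odd. The main difficulty will be the bookkeeping: verifying at every stage that the auxiliary finite groups in play ($G(X)$, $T$, $\ker f_1$, $\im[\Phi_A(X)\to H^1(X,\cA^0)]$, and $\Sigma/\Sha$) all have orders coprime to $n$, so that passage to $n$-torsion through the relevant short exact sequences preserves the desired identifications.
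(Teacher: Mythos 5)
Your proof is correct and takes a genuinely different route from the paper's. The paper first establishes the statement for $\widetilde{\cA[n]}=\cA[n]$ by showing that $\Phi_A/n\Phi_A$ is $p$-cohomologically trivial (for $p\mid n$), so that the bottom row of the Kummer diagram yields $H^i(X,\cA')\isom H^i(X,\cA)$; it then inserts this into the exact sequence of Lemma~\ref{kummerlemma}, rewrites the outer terms via Lemma~\ref{lem}, and only afterwards passes from $\cA[n]$ to $\widetilde{\cA[n]}$ by observing that the quotient $\cA[n]/\widetilde{\cA[n]}$ is a subquotient of $\Phi_A$ and hence $p$-cohomologically trivial. You instead build a Kummer-type sequence adapted directly to $\widetilde{\cA[n]}$: you introduce the open subgroup scheme $\widetilde{\cA}$ with $\cA^0\subset\widetilde{\cA}\subset\cA$ and $\widetilde{\cA}/\cA^0\isom G=\widetilde{\cA[n]}/\cA[n]^0$, and show multiplication by $n$ fits into a short exact sequence $0\to\widetilde{\cA[n]}\to\widetilde{\cA}\xrightarrow{n}\cA^0\to 0$ of fppf sheaves. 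This bypasses $\cA'$ and Lemma~\ref{kummerlemma} entirely, and replaces the invocation of Lemma~\ref{lem} with direct snake-lemma arguments. The identification of the $H^1$ terms also differs: rather than comparing $H^1(X,\cA')$ with $H^1(X,\cA)$, you show $H^1(X,G)=0$ (via the Herbrand-quotient equality $|H^1(\Spec k_v,\,\cdot\,)|=|H^0(\Spec k_v,\,\cdot\,)|$, combined with $G$ being $n$-torsion of order prime to $n$), deduce that $H^1(X,\cA^0)\twoheadrightarrow H^1(X,\widetilde{\cA})$ has kernel of order prime to $n$, and then identify $\ker[(n)_*]\isom H^1(X,\cA^0)[n]\isom\Sigma[n]=\Sha(A/K)[n]$. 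Both proofs turn on the same underlying coprimality-to-Tamagawa-numbers input; yours is arguably more symmetric and self-contained, at the cost of some extra diagram bookkeeping, while the paper's reuses the already-established Lemmas~\ref{kummerlemma} and~\ref{lem}. One small point of rigor: your assertion that ``the identical argument'' gives $\cA^0(X)/n\cA^0(X)\isom A(K)/nA(K)$ is more cleanly phrased as the snake lemma applied to multiplication by $n$ on $0\to\cA^0(X)\to A(K)\to T\to 0$, using $T[n]=T/nT=0$; the mechanism is slightly different from what you used for $n\widetilde{\cA}(X)=n\cA^0(X)$, though equally routine.
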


The statement and proof of Proposition \ref{main:prop} has been adapted from the discussion preceding Corollaries A.4 and A.5 in \cite{cmapp}. In particular, the statement of the proposition is almost the same as part (iii) of Corollary A.5 in \cite{cmapp}. Before we prove the proposition above, we
shall need the following lemma.

\begin{lemma}\label{lem}
Under the hypotheses of the above proposition, there are isomorphisms
$$A(K)\tensor{\Q_p/\Z_p} \isom H^0(X,\cA)\tensor{\Q_p/\Z_p}\isom H^0(X,\cA^0)\tensor{\Q_p/\Z_p}$$
$$\Sha(A/K)[p^{\infty}] \isom H^1(X,\cA^0)[p^{\infty}] \isom H^1(X,\cA)[p^{\infty}]$$ for any prime $p$ dividing $n$.
\end{lemma}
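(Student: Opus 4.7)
The plan is to derive both chains of isomorphisms by breaking the main exact sequence (\ref{mainseq}) and the sequence (\ref{kernelsha}) into short pieces whose ``error terms'' are finite groups of order dividing $\prod_v c_{\scriptscriptstyle{A},v}$, and hence coprime to $p$ by hypothesis. The functors $(-)\tensor\Q_p/\Z_p$ (for the first chain) and $(-)[p^\infty]$ (for the second) will then annihilate these error terms.

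For the first chain, I would split (\ref{mainseq}) at $A(K)$ to obtain
$$0 \to \cA^0(X) \to A(K) \to C \to 0,$$
where $C$ is the image of $A(K)$ in $\bigoplus_v\Phi_{A,v}(k_v)$; in particular $\#C$ divides $\prod_v c_{\scriptscriptstyle{A},v}$, so $C$ has no $p$-torsion. The associated Tor long exact sequence for $(-)\tensor\Q_p/\Z_p$ then yields an isomorphism $\cA^0(X)\tensor\Q_p/\Z_p \isom A(K)\tensor\Q_p/\Z_p$, since both $C\tensor\Q_p/\Z_p$ and $\mathrm{Tor}_1(C,\Q_p/\Z_p)\isom C[p^\infty]$ vanish. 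Combined with the N\'eron mapping property, which gives $H^0(X,\cA)\isom A(K)$ and $H^0(X,\cA^0)\isom \cA^0(X)$, this produces the first chain.

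For the second chain, I would split the remainder of (\ref{mainseq}) as
$$0 \to C' \to H^1(X,\cA^0) \to \Sigma \to 0,$$
where $C'=\left(\bigoplus_v\Phi_{A,v}(k_v)\right)/C$ again has order dividing $\prod_v c_{\scriptscriptstyle{A},v}$ and so has trivial $p$-primary part. Taking $p$-primary torsion gives $H^1(X,\cA^0)[p^\infty]\isom\Sigma[p^\infty]$, and since $p$ is odd and $\Sigma/\Sha$ has exponent $2$ (as recalled after (\ref{shasigma})), this equals $\Sha(A/K)[p^\infty]$. Finally, (\ref{kernelsha}) realizes $\Sha$ as the kernel of the map $H^1(X,\cA)\to\bigoplus_v H^1(\Spec k_v,\Phi_{A,v})$; for each $v$, $H^1(\Spec k_v,\Phi_{A,v})$ is the Frobenius coinvariants of a finite abelian group, whose cardinality equals that of the Frobenius invariants~$\Phi_{A,v}(k_v)$, namely $c_{\scriptscriptstyle{A},v}$. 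Hence the cokernel of $\Sha\hra H^1(X,\cA)$ has order dividing $\prod_v c_{\scriptscriptstyle{A},v}$ and vanishes upon taking $p$-primary torsion, giving $\Sha(A/K)[p^\infty]\isom H^1(X,\cA)[p^\infty]$.

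The only substantive obstacle is the bookkeeping: verifying at each stage that the intermediate finite groups ($C$, $C'$, and the cokernel of $\Sha\hra H^1(X,\cA)$) really have order dividing $\prod_v c_{\scriptscriptstyle{A},v}$, and in particular the finite-cohomology computation $\#H^1(\Spec k_v,\Phi_{A,v})=c_{\scriptscriptstyle{A},v}$. Once these divisibilities are in hand, every isomorphism in the lemma follows formally from the hypothesis $\gcd(n,\prod_v c_{\scriptscriptstyle{A},v})=1$ together with the exactness of $(-)[p^\infty]$ on torsion sequences and the right exactness of $(-)\tensor\Q_p/\Z_p$.
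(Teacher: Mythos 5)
Your proof is correct and follows essentially the same route as the paper: both break the exact sequence (\ref{mainseq}) into short pieces whose finite kernels and cokernels have order dividing $\prod_v c_{\scriptscriptstyle{A},v}$ and hence are prime to $p$, then apply $(-)\tensor\Q_p/\Z_p$ for the first chain and $(-)[p^\infty]$ for the second. If anything you are more explicit than the paper's terse lemma proof, in particular by fully justifying $\Sha(A/K)[p^\infty]\isom H^1(X,\cA)[p^\infty]$ via the kernel sequence (\ref{kernelsha}) together with the computation $\#H^1(\Spec k_v,\Phi_{A,v})=c_{\scriptscriptstyle{A},v}$, a step the paper only spells out afterwards, inside the proof of Proposition~\ref{main:prop}.
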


\begin{proof}
Consider the exact sequence (\ref{mainseq}). The order of the group $\Phi_A(X)\isom\bigoplus_{v}\Phi_{A,v}(\F_v)$ is the product $\prod_{v}{c_{\scriptscriptstyle{A},v}}$ of the Tamagawa numbers of $A$. Thus, the hypothesis of the proposition suggests that the finite group $\Phi_A(X)$ has trivial $p$-primary components for all primes $p$ dividing $n$. Now, in the sequence (\ref{mainseq}), we find that the injective map $\cA^0(X)\to A(K)$ has a cokernel which is finite and whose $p$-primary components are trivial (being a subgroup of $\Phi_A(X)$). This immediately implies, by invoking the snake lemma, that 
$$H^0(X,\cA^0)\tensor{\Q_p/\Z_p}\isom A(K)\tensor{\Q_p/\Z_p}\isom H^0(X,\cA)\tensor{\Q_p/\Z_p}$$ where the second isomorphism follows from the universal property of $\cA$. Similarly, the surjective map $H^1(X,\cA^0)\to\Sha(A/K)$ has a kernel which is finite and has trivial $p$-primary components (again, being a subgroup of $\Phi_A(X)$) so that, once again, we have $$\Sha(A/K)[p^{\infty}] \isom H^1(X,\cA^0)[p^{\infty}] \isom H^1(X,\cA)[p^{\infty}]$$
\end{proof}

\begin{proof}[Proof of Proposition \ref{main:prop}]
The sequences (\ref{ses}) and (\ref{ses:flat}) fit into a commutative diagram 
$$\xymatrix{
0\ar[r] & \cA[n]\ar[r] & \cA\ar[d]\ar[dr]^n\ar[r] & \cA'\ar[d]\ar[r] & 0 & \\
& 0\ar[r] & \cA'\ar[r] & \cA\ar[r] & {\Phi_A}/{n\Phi_A}\ar[r] & 0}$$
which we will call the \emph{Kummer diagram} for $A$ (\cite{mazur:towers}). Now any factor $p$ of $n$ does not divide the Tamagawa numbers $c_{\scriptscriptstyle{A},v}=\#\Phi_{A,v}(k_v)$ of $A$ at $v$, for all $v$. We note that, regarding $\Phi_{A,v}$ as a finite Galois module with a continuous action of $\Gal({\kbar}_v/{k_v})=\Gal(K_v^{\ur}/K_v)$, we have $$c_{\scriptscriptstyle{A},v}=\#\Phi_{A,v}(k_v)=\#H^0(K_v^{\ur}/K_v,\Phi_{A,v})=\#H^1(K_v^{\ur}/K_v,\Phi_{A,v})$$
where the last equality follows from the fact that the Herbrand quotient (\cite[\S VIII.4]{serre:localfields}) of the finite module $\Phi_{A,v}$ (in fact, any finite module) is $1$. The Galois cohomology group ~$H^i(K_v^{\ur}/K_v,\Phi_{A,v})$ is equal to the flat cohomology group $H^i(\Spec{k_v},\Phi_{A,v})$ for all $i$, where we regard $\Phi_{A,v}$ as a sheaf for the flat topology over $\Spec{k_v}$. Thus, we have $$c_{\scriptscriptstyle{A},v}=\#\Phi_{A,v}(k_v)=\#H^0(\Spec{k_v},\Phi_{A,v})=\#H^1(\Spec{k_v},\Phi_{A,v})$$ So the hypothesis on $p$ implies that the sheaf $\Phi_{A,v}$ is \emph{$p$-cohomologically trivial} over $\Spec{k_v}$ i.e the $p$-primary components of $H^i(\Spec{k_v},\Phi_{A,v})$ are zero for all $i=0,1$. By considering (\ref{component}), this implies that the sheaf $\Phi_A$ is \emph{$p$-cohomologically trivial} over $X$ for all primes $p$ dividing $n$ or equivalently, $\Phi_A/n\Phi_A$ is cohomologically trivial, i.e., for all $i$, $H^i(X,\Phi_A/n\Phi_A)=0$. Obviously, any subquotient $\Psi$ of $\Phi_A$ is $p$-cohomologically trivial as well. Considering the long exact sequence of flat cohomology groups corresponding to the bottom exact row of the Kummer diagram, this implies an isomorphism $$H^i(X,\cA')\isom H^i(X,\cA).$$
Applying this isomorphism to the exact sequence in Lemma~\ref{kummerlemma}, we get an exact sequence $$0 \to \cA(X)/n\cA(X) \to H^1(X,\cA[n]) \to H^1(X,\cA)[n] \to 0$$ which can also be written as
\begin{equation}\label{flat:kummer}
0 \to A(K)/nA(K) \to H^1(X,\cA[n]) \to \Sha(A/K)[n] \to 0
\end{equation}
by Lemma \ref{lem}. Now let $\cA[n]^0=\cA^0\cap\cA[n]$ be the largest open subgroup scheme of $\cA[n]$ every fiber of which is connected. Let $\widetilde{\cA[n]}\subset\cA[n]$ be any open subgroup scheme of $\cA[n]$ containing $\cA[n]^0$, i.e., $$\cA[n]^0\subset\widetilde{\cA[n]}\subset\cA[n]$$ We now have an exact sequence of group schemes over the flat topology of $X$:
\begin{equation} \nonumber 
0 \to \widetilde{\cA[n]} \to \cA[n] \to \Psi_A \to 0
\end{equation}
where $\Psi_A$ is a subquotient of $\Phi_A$. The hypothesis on $n$ implies that, for all $i=0,1$, $H^i(X,\Psi)$ has trivial $p$-primary components for all primes $p$ dividing $n$. In particular, we find that 
\begin{equation} \nonumber  
H^i(X,\widetilde{\cA[n]})=H^i(X,\widetilde{\cA[n]})\tensor \Z/n\Z \isom H^i(X,\cA[n])\tensor \Z/n\Z=H^i(X,\cA[n])
\end{equation}
The exact sequence (\ref{flat:kummer}) can then be written as
\begin{equation} \nonumber 
0 \to A(K)/nA(K) \to H^1(X,\widetilde{\cA[n]}) \to \Sha(A/K)[n] \to 0,
\end{equation}
as was to be shown.
\end{proof}

\section{Constructing Elements of $\Sha(A/\Q)$ using Mordell-Weil Groups}
\label{sec:mainthm}

The following theorem is the main theorem of this article.
It is an adaptation and generalization of the discussion in \cite{cmapp}. 
It can sometimes be used to construct non-trivial elements of $\Sha(A/K)$ 
and thus to provide nontrivial lower bounds for the order of $\Sha(A/K)$.

\begin{theorem}\label{maintheorem}
Let $A$ and $B$ be abelian varieties over a number field $K$, of ranks $r_{\ssstyle{A}}$ and $r_{\ssstyle{B}}$ respectively, such that $B$ has semistable reduction over $K$. Let $N$ be an integer divisible by the residue characteristics of the primes of bad reduction for both $A$ and $B$. Let $n$ be an odd integer such that for each prime $p\;|\;n$, we have $e_p<p-1$, where $e_p$ is the largest ramification index of any prime of $K$ lying over $p$, and such that
$$gcd\left(n,\;N\cdot\;\#A(K)_{\tor}\cdot\;\prod_{v}c_{\scriptscriptstyle{B},v}\right)=1$$
Suppose further that $B[n]\isom A[n]$ over $K$. Then there is a map
$$\vphi\,:\,B(K)/nB(K) \to H^1(K,A)$$ 
such that $\ker(\vphi)$ has order at most $n^{r_{\ssstyle{A}}}$. Moreover, we have that
\begin{description}
\item[(i)] if $r_{\ssstyle{B}}>r_{\ssstyle{A}}$, then $n^{r_{\ssstyle{B}}-r_{\ssstyle{A}}}$ divides the product $\left(\#\Sha(A/K)\cdot\;\prod_{v}c_{\ssstyle{A},v}\right)$.
\item[(ii)] if $gcd\left(n,\;\prod_{v}c_{\ssstyle{A},v}\right)=1$, then $\im(\vphi)\subset\Sha(A/K)$. In particular, if we also have that $r_{\ssstyle{A}}=0$, then there is an injection $$B(K)/nB(K)\hra\Sha(A/K)\ .$$
The injection above becomes an isomorphism if we further assume that for all primes $p$ dividing $n$, $\Sha(B/K)$ has trivial $p$-torsion.
\end{description}
\end{theorem}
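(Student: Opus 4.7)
The plan is to construct $\varphi$ as a composition of maps arising from flat Kummer sequences for $A$ and $B$, linked by an isomorphism $\widetilde{\cB[n]}\isom\widetilde{\cA[n]}$ of quasi-finite flat group schemes over $X$ lifting the generic-fiber isomorphism $B[n]\isom A[n]$. Proposition~\ref{main:prop} applied to $B$ (whose Tamagawa hypothesis holds by assumption) yields an injection $B(K)/nB(K)\hookrightarrow H^1(X,\widetilde{\cB[n]})$. Passing through the isomorphism $H^1(X,\widetilde{\cB[n]})\isom H^1(X,\widetilde{\cA[n]})$, then $H^1(X,\widetilde{\cA[n]})\to H^1(X,\cA[n])$, then the surjection $H^1(X,\cA[n])\twoheadrightarrow H^1(X,\cA)[n]$ from Lemma~\ref{kummerlemma}, and finally the restriction $H^1(X,\cA)\to H^1(K,A)$, produces the desired map $\varphi$.

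The crucial technical input is upgrading the Galois-module isomorphism $A[n]\isom B[n]$ to a flat isomorphism of quasi-finite group schemes over $X$ for a compatible choice of the open subgroup schemes $\widetilde{\cA[n]}$ and $\widetilde{\cB[n]}$. At primes $v$ not lying above any $p\mid n$, which (by $\gcd(n,N)=1$) include all primes of bad reduction for either variety, the group schemes $\cA[n]$ and $\cB[n]$ are {\'e}tale, hence determined by the Galois action on their generic fibers. At primes $v\mid p$ with $p\mid n$ (automatically primes of good reduction for both $A$ and $B$), the ramification hypothesis $e_p<p-1$ permits the invocation of Raynaud's theorem, which guarantees that finite flat group schemes of $p$-power order are uniquely determined by their generic fibers. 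Patching these local identifications into a global isomorphism of the chosen open subgroup schemes is the main obstacle of the proof.

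The kernel bound follows from the usual Galois Kummer sequence $0\to A(K)/nA(K)\to H^1(K,A[n])\to H^1(K,A)[n]\to 0$: since $B(K)/nB(K)$ embeds into $H^1(K,B[n])\isom H^1(K,A[n])$, the kernel of $\varphi$ sits inside the image of $A(K)/nA(K)$, which has order $n^{r_{\ssstyle A}}$ by the hypothesis $\gcd(n,\#A(K)_{\tor})=1$. For part~(i), combining the divisibility $n^{r_{\ssstyle B}}\mid\#(B(K)/nB(K))$ with $|\ker\varphi|\mid n^{r_{\ssstyle A}}$ shows that the image of $B(K)/nB(K)$ in $H^1(X,\cA)[n]$ has order divisible by $n^{r_{\ssstyle B}-r_{\ssstyle A}}$; the exact sequence~(\ref{kernelsha}) together with the Herbrand-quotient identity $\#H^1(\Spec k_v,\Phi_{A,v})=c_{\ssstyle A,v}$ used in the proof of Proposition~\ref{main:prop} shows that $|H^1(X,\cA)[n]|$ divides $\#\Sha(A/K)\cdot\prod_v c_{\ssstyle A,v}$, yielding~(i).

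For part~(ii), the added Tamagawa hypothesis on $A$ allows Proposition~\ref{main:prop} to be applied to $A$ as well, producing a commutative diagram from which one reads off $\im(\varphi)\subseteq\Sha(A/K)[n]$; when $r_{\ssstyle A}=0$, the group $A(K)/nA(K)$ vanishes, so $\varphi$ becomes injective. Finally, if $\Sha(B/K)[p]=0$ for every $p\mid n$, the sequence of Proposition~\ref{main:prop} for $B$ collapses to an isomorphism $B(K)/nB(K)\isom H^1(X,\widetilde{\cB[n]})$; composing this with the chain of identifications exhibits $\varphi$ as an isomorphism onto $\Sha(A/K)[n]$.
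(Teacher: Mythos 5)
Your proposal is correct and follows essentially the same route as the paper: the Galois Kummer sequence gives the definition of $\vphi$ and the kernel bound; the heart is upgrading $B[n]\isom A[n]$ to an isomorphism of the torsion subschemes of the N\'eron models over $X$ (\'etale descent away from $p\mid n$, Raynaud's theorem under $e_p<p-1$ at primes over $p$, then patching); and Proposition~\ref{main:prop} applied to $B$ together with Lemma~\ref{kummerlemma} for $A$ and the sequence~(\ref{kernelsha}) yield (i) and (ii). The only cosmetic difference is that you bound $|H^1(X,\cA)[n]|$ directly by $\#\Sha(A/K)\cdot\prod_v c_{\ssstyle A,v}$, whereas the paper bounds $|\im(\psi)|$ by analyzing the restriction of the map to $\bigoplus_v H^1(\Spec k_v,\Phi_{A,v})$ — these are equivalent, and both require the paper's reduction to $n=p^m$ (which you use implicitly) to turn the lower bound on $|\im(\psi)|$ into an actual divisibility.
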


For a discussion of what happens when $n$ is divisible by a prime
of bad reduction for either $A$ or $B$ or both, see the proof of Sublemma A.8 in \cite{cmapp} as well as the discussion immediately following it which completes the proof of Proposition A.7 therein.

In the rest of this section, we give the proof of Theorem~\ref{maintheorem}. 
To define the map $\vphi:B(K)/nB(K)\to H^1(K,A)$, note that the isomorphism ~$B[n]\isom A[n]$ over $K$ induces an isomorphism $H^1(K,B[n])\isom H^1(K,A[n])$ of Galois cohomology groups. Consequently, the Kummer sequence for both $A$ and $B$ fit in the following way
$$\xymatrix{
0\ar[r] & B(K)/nB(K)\ar[r] & H^1(K,B[n])\ar[r]\ar[d]^{\cong} & H^1(K,B)[n]\ar[r] & 0\\
0\ar[r] & A(K)/nA(K)\ar[r] & H^1(K,A[n])\ar[r] & H^1(K,A)[n]\ar[r] & 0}$$
The composition $B(K)/nB(K)\hra H^1(K,B[n])\isom H^1(K,A[n])\to H^1(K,A)[n]$ defines $\vphi$. The kernel of $\vphi$ is seen to be contained in the kernel of the map $H^1(K,A[n])\to H^1(K,A)[n]$ which is equal to the image of the injection $A(K)/nA(K)\hra H^1(K,B[n])$ by the exactness of the Kummer sequence for $A$. Since $n$ is coprime to $A(K)_{\tor}$, we find that $$\#A(K)/nA(K)=n^{r_{\ssstyle{A}}}$$ Thus $\ker(\vphi)$ has order at most $n^{r_{\ssstyle{A}}}$.

We now prove the rest of the statements in Theorem \ref{maintheorem}. Clearly, it suffices to prove the theorem when $n=p^m$ is the power of an odd prime~$p$, which is what we assume from now on. We begin by proving the following lemma.

\begin{lemma}
Under the hypotheses of the above theorem, there is an isomorphism $$\cA[p^m]\isom\cB[p^m]$$ over $X=\Spec{\O_K}$.
\end{lemma}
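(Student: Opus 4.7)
The plan is to construct the desired isomorphism locally at each prime $v$ of $\O_K$ and then glue. At the outset, I note that the hypothesis $\gcd(n,N)=1$, combined with $N$ being divisible by the residue characteristics of the primes of bad reduction for both $A$ and $B$, forces every prime $v$ of $\O_K$ lying over $p$ to be a prime of good reduction for both abelian varieties. At such $v$, the restrictions $\cA|_{\O_{K,v}}$ and $\cB|_{\O_{K,v}}$ are abelian schemes, and their $p^m$-torsion subgroup schemes are finite flat commutative $\O_{K,v}$-group schemes of $p$-power order.

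At each prime $v$ lying above $p$, the ramification hypothesis $e_v \le e_p < p-1$ permits us to invoke Raynaud's theorem on prolongations of finite flat group schemes: the generic-fibre functor from finite flat commutative $\O_{K,v}$-group schemes of $p$-power order to finite commutative $K_v$-group schemes is fully faithful. Consequently the given isomorphism $B[p^m] \isom A[p^m]$ over $K$ (restricted to $K_v$) extends uniquely to an isomorphism $\cB[p^m]|_{\O_{K,v}} \isom \cA[p^m]|_{\O_{K,v}}$ of finite flat group schemes. At each prime $v$ with $v \nmid p$, multiplication by $p^m$ is étale on the smooth group schemes $\cA|_{\O_{K,v}}$ and $\cB|_{\O_{K,v}}$ (since $p$ is a unit in $k_v$), so $\cA[p^m]|_{\O_{K,v}}$ and $\cB[p^m]|_{\O_{K,v}}$ are quasi-finite étale $\O_{K,v}$-group schemes. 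Combining this étaleness with the N\'eron mapping property, one verifies that $\cA[p^m]|_{\O_{K,v}}$ represents $j_{v,*}(A[p^m])$ as an étale sheaf on $\Spec \O_{K,v}$, where $j_v : \Spec K_v \hra \Spec \O_{K,v}$, and similarly for $\cB$; since $j_{v,*}$ preserves isomorphisms, the generic isomorphism induces the required local isomorphism.

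Each of the local isomorphisms constructed above is the unique extension of the same fixed generic isomorphism $B[p^m] \isom A[p^m]$, and hence they agree on the generic fibre and glue to a global isomorphism $\cA[p^m] \isom \cB[p^m]$ over $X = \Spec \O_K$. The principal obstacle is the invocation of Raynaud's theorem at primes above $p$: it is precisely this step that requires the ramification hypothesis $e_p < p - 1$, and it is the reason that assumption appears in the statement of the main theorem. The analysis at primes not dividing $p$ is comparatively routine once one recognizes that $[p^m]$ becomes an étale isogeny at such primes, reducing the question to one about étale sheaves on $\Spec \O_{K,v}$.
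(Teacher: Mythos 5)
Your proof is correct and takes essentially the same approach as the paper: the same dichotomy between primes over $p$ (where Raynaud's full-faithfulness result for finite flat group schemes of $p$-power order, requiring $e_p < p-1$, supplies the local extension of the generic isomorphism) and primes away from $p$ (where multiplication by $p^m$ is \'etale and the N\'eron mapping property does the work). The only cosmetic difference is that the paper performs the \'etale-side construction once over the global open $U = X \setminus \{v : v \mid p\}$ rather than locally over each $\O_{K,v}$, which makes the final gluing with the finitely many local models at $p$ a bit more transparent, but the underlying ideas are the same.
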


\begin{proof}
Let $U$ be an open subset of $X$ whose complement is the finite set of primes in $X$ having residue characteristic equal to $p$. 
Since the residue characteristics of the primes in $U$ are not equal to $p$, we find that the multiplication-by-$p^m$ map $$\cA_{/U}\overset{p^m}{\longrightarrow}\cA_{/U}$$ is \emph{{\'e}tale} (\cite[\S 7.3, Lemma 2(b)]{neronmodels}). Now consider the diagram
$$\xymatrix{
\cA[p^m]\ar[r]\ar[d] & U\ar[d]\\
\cA_{/U}\ar[r]^{p^m} & \cA_{/U}}$$
defining the kernel $\cA[p^m]_{/U}$. We find that $\cA[p^m]_{/U}$, being the fiber of multiplication-by-$p^m$ over the unit section $U\to\cA_{/U}$, is {\'e}tale (smooth) over $U$ with generic fiber $A[p^m]$ over $K$. Similar arguments hold for $B$ as well. 

The composition $A[p^m]\isom B[p^m]\hra B$ defines a map $A[p^m]\to B$ over $K$. By the N{\'e}ron mapping property over $U$, this map extends to a map $\cA[p^m]_{/U}\to\cB_{/U}$ over $U$, which factors through $\cB[p^m]$. Thus, we get a map $\cA[p^m]_{/U}\to\cB[p^m]_{/U}$. In a similar way, we get a map $\cB[p^m]_{/U}\to\cA[p^m]_{/U}$. The composition of these two maps over $U$ (in any order) induces the identity map over the generic fibers and hence, by the uniqueness of the N{\'e}ron mapping property, we get an isomorphism $$\cA[p^m]_{/U}\isom\cB[p^m]_{/U}$$ over $U$ which extends the isomorphism $A[p^m]\isom B[p^m]$ over $K$. Now $p$ is a prime of \emph{good} reduction for both $A$ and $B$, since $gcd(p^m,N)=1$. Therefore, over $X_v=\Spec{\O_{K,v}}$ with char$(v)=p$, we find that both $\cA[p^m]_{/{X_v}}$ and $\cB[p^m]_{/{X_v}}$ are finite flat group schemes of odd order with isomorphic generic fibers $A[p^m]_{/K_v}\isom B[p^m]_{/K_v}$. By Corollary 3.3.6 in \cite{raynaud-ppp}, we have an isomorphism $$\cA[p^m]_{/{X_v}}\isom\cB[p^m]_{/{X_v}}$$ over $X_v$. We can now `patch' the isomorphisms over all such $X_v$'s with the one over $U$ to get the desired isomorphism $$\cA[p^m]_{/X}\isom\cB[p^m]_{/X}$$ over $X$. 
\end{proof}

The isomorphism $\cA[p^m]_{/X}\isom\cB[p^m]_{/X}$ gives an isomorphism $$H^1(X,\cA[p^m])\isom H^1(X,\cB[p^m])$$ of flat cohomology groups. Consider the commutative diagram
\begin{equation}\label{diagram}
\xymatrix{
0\ar[r] & B(K)/p^mB(K)\ar[r] & H^1(X,\cB[p^m])\ar[r]\ar[d]^{\cong} & \Sha(B/K)[p^m]\ar[r] & 0\\
0\ar[r] & \cA'(X)/p^m\cA(X)\ar[r] & H^1(X,\cA[p^m])\ar[r] & H^1(X,\cA)[p^m]\ar[r] & 0} \ \ ,
\end{equation}
where the first row in the diagram follows from Proposition~\ref{main:prop},
the second row from Lemma \ref{kummerlemma}, and the vertical isomorphism is the one preceding the diagram. Composition gives a map $$\psi:B(K)/p^mB(K)\to H^1(X,\cA)[p^m]$$ Since there is a natural injection $H^1(X,\cA)\hra H^1(K,A)$ that follows from the Leray spectral sequence, we have the following diagram 
$$\xymatrix{
B(K)/p^mB(K)\ar[r]^{\psi}\ar[dr]^{\vphi} & H^1(X,\cA)\ar@{^{(}->}[d]\\
& H^1(K,A)}$$
In particular, $\ker(\psi)=\ker(\vphi)$ and $\im(\psi)=\im(\vphi)$. Now consider the diagram
$$\xymatrix{
& 0\ar[d]\\
& \Sha(A/K)\ar[d]^{f}\\
B(K)/p^mB(K)\ar[r]^{\psi} & H^1(X,\cA)\ar[d]^{g}\\
& \bigoplus_{v}H^1(\Spec{k_v},\Phi_{A,v})}$$
where the vertical column is the exact sequence (\ref{kernelsha}). If $r_{\ssstyle{B}}>r_{\ssstyle{A}}$, then we have $q_{\ssstyle{A}}<{q_{\ssstyle{B}}}$, where $q_{\ssstyle{A}}=(p^m)^{r_{\ssstyle{A}}}$ and $q_{\ssstyle{B}}=(p^m)^{r_{\ssstyle{B}}}$. Thus we have $$\#\ker(\vphi)\leq{q_{\ssstyle{A}}}<{q_{\ssstyle{B}}}\leq\#B(K)/p^mB(K)$$ This implies that $\im(\psi)\isom\left(B(K)/p^mB(K)\right)/{\left(\ker(\psi)\right)}$ has order at least $\frac{q_{\ssstyle{B}}}{q_{\ssstyle{A}}}$. Now consider the map $$g_{\psi}:\im(\psi)\to\bigoplus_{v}H^1(\Spec{k_v},\Phi_{A,v})$$ where $g_{\psi}$ is the restriction of $g$ to $\im(\psi)$. We find that $\#\im(\psi)$ divides the product $(\#\ker(g_{\psi})\cdot\#\im(g_{\psi}))$. However, $\ker(g_{\psi})\subset\ker(g)=\im(f)$ so that $\#\ker(g_{\psi})$ divides $\#\im(f)=\#\Sha(A/K)$ (assuming $\#\Sha(A/K)$ is finite). On the other hand, $\#\im(g_{\psi})$ divides $\#\bigoplus_{v}H^1(\Spec{k_v},\Phi_{\ssstyle{A},v})=\prod_{v}c_{\ssstyle{A},v}$. We thus conclude that $\#\im(\psi)\geq\frac{q_{\ssstyle{B}}}{q_{\ssstyle{A}}}$ divides the product $(\#\Sha(A/K)\cdot\,\prod_{v}c_{\scriptscriptstyle{A},v})$.

Now suppose that $gcd(p^m,\prod_{v}c_{\scriptscriptstyle{A},v})=1$. By Lemma \ref{lem}, we have an isomorphism $$H^1(X,\cA)[p^m]\isom\Sha(A/K)[p^m]$$ and consequently, we have $$\im(\vphi)=\im(\psi)\subset H^1(X,\cA)[p^m]\isom\Sha(A/K)[p^m]\subset \Sha(A/K)$$ Furthermore, if $r_{\ssstyle{A}}=0$ then $A(K)/p^mA(K)$ is trivial, since $p^m$ is coprime to $\#A(K)_{\tor}$. Since $\ker(\psi)\subset A(K)/p^mA(K)$, this implies that $\ker(\psi)=0$ and we have an injection $$\psi:B(K)/p^mB(K)\hra\Sha(A/K)$$ If we further suppose that $\Sha(B/K)[p^m]$ is trivial, then the third term in the top row and the first term in the bottom row of the commutative diagram (\ref{diagram}) above are both trivial and we find that $\vphi$ is an isomorphism in this case.

\newcommand{\etalchar}[1]{$^{#1}$}

\end{document}